\newtheorem{Theorem}{Theorem}
\newtheorem{Corollary}{Corollary}
\newtheorem{Proposition}{Proposition}
\newtheorem{Remark}{Remark}
\numberwithin{equation}{section}
\newcommand{\Lip}{\textrm{Lip}}
\newcommand{\Prob}{\mathbb{P}}
\newcommand{\EXP}{\mathbb{E}}
\newcommand{\mP}{\mathbb P}
\newcommand{\mE}{\mathbb E}
\newcommand{\fl}[1]{\lfloor {#1} \rfloor}
\newcommand{\pe}{p}
\title[Exponential decay...]{Exponential decay of correlations for random interval diffeomorphisms}
\author{Klaudiusz Czudek}
\address{Klaudiusz Czudek, Institute of Applied Mathematics, Faculty of Physics and Applied Mathematics, Gda{\'n}sk University of Technology, ul. Gabriela Narutowicza 11/12, 80-223 Gda{\'n}sk, Poland}
\email{klaudiusz.czudek@gmail.com}
\subjclass[2020]{ Primary 37A25, 37H12, Secondary 37E05.}
\keywords{synchronization, mixing, decay of correlations, interval diffeomorphisms, iterated function system}
\begin{document}

\begin{abstract}
We consider a finite number of orientation preserving $C^2$ interval diffeomorphisms and apply them randomly in such a way that the expected Lyapunov exponents at the boundary points are positive. We prove the exponential decay of correlations for Lipschitz observables with respect to the unique stationary measure supported on the interior of the interval. The key step is to show the exponential synchronization in average.
\end{abstract}

\maketitle

\section{Introduction}

This paper is devoted to iterated function systems (\textbf{IFS} for short) consisting of orientation preserving homeomorphisms $f_1,\cdots, f_m$  of the interval $[0,1]$ with a probability vector $(p_1,\cdots,p_m)$ assigned to them. We investigate the ergodic properties of this system when the functions are applied randomly in i.i.d. fashion.  Such systems drew a lot of attention of mathematical community because of their ergodic and dynamical properties (e.g.  \cite{Dolgopyat_Krikorian_2007}, \cite{Ilyashenko_2010}, \cite{Diaz_Gelfert_2012}, \cite{Gharaei_Homburg_2017}, \cite{DeWitt_Dolgopyat_2024}) and their relevance to the study of group representations on manifolds (e.g. \cite{Navas_2011}, \cite{Navas_2018}).
\vspace{0.5cm}

Let $M$ be an arbitrary Polish space, let $f_1,\cdots, f_m : M\rightarrow M$ be continuous transformations on $M$ and let $(p_1,\cdots, p_m)$ be a probability vector. Throughout the paper $\mathbb{N}=\{1, 2, \ldots\}$, $\Sigma=\{1,\cdots, m\}^\mathbb{N}$, $\mathcal{F}$ is the product $\sigma$-field on $\Sigma$, $(\mathcal{F}_n)\subseteq \mathcal{F}$ is the standard filtration. The measurable space $(\Sigma, \mathcal{F})$ is equipped with the infinite product of the measure $(p_1,\cdots,p_m)$ denoted by $\Prob$. The average with respect to that measure is denoted by $\mathbb{E}$.  Let $f^n_\omega(x)=f_{\omega_n}\circ\cdots \circ f_{\omega_1}(x)$ for $n\ge 1$, $\omega=(\omega_1,\omega_2,\cdots)$ and $f^0_\omega(x)=x$. By $\theta$ we denote the left shift on $\Sigma$. Then $(f^n_\omega(x))_{n\ge 0}$ is the Markov process  started at $x\in M$ with transition kernel
\begin{equation}\label{Markov}
p(x,\cdot) = \sum_{i=1}^m p_i \delta_{f_i(x)}(\cdot).
\end{equation}

Let $\mathcal{B}(M)$ denote the space of Borel measurable bounded real functions on $M$, and let $\mathcal{M}$ stands for the space of Borel probability measures on $M$. With the Markov process above we associate the Markov operator $P:\mathcal{M}\rightarrow \mathcal{M}$
\begin{equation}
\label{E:Intro1}
P\mu(A)=\sum_{i=1}^m p_i \mu(f_i^{-1}(A)),
\end{equation}
and its predual operator $U:\mathcal{M}\rightarrow \mathcal{M}$
\begin{equation}
\label{E:Intro2}
U\varphi(x)=\sum_{i=1}^m p_i \varphi(f_i(x)).
\end{equation}
Those operators obey the relation
\begin{equation}
\label{E:Intro3}
\int_M \varphi dP\mu = \int_M U\varphi d\mu, \quad \mu\in \mathcal{M}, \varphi\in \mathcal{B}(M).
\end{equation}
\vspace{0.5cm}

The central notion of the present paper is synchronization. We say the system of homeomorphisms of $M$ with probability vector is synchronizing if $|f^n_\omega(x)- f^n_\omega(y)|\to 0 $ a.s. for any $x,y\in M$. Homburg has shown \cite{Homburg_2018}, Theorem 1.1., that on every compact manifold $M$ there exists a $C^1$-open set of systems of $C^2$ diffeomorphism such that the corresponding (minimal) iterated function system is synchronizing on some open and dense subset of $M$.

This property turns out to be quite common if $M$ is one-dimensional. The first result in that direction has been obtained by  Antonov \cite{Antonov_1984}, whose theorem is as follows. Let $M=\mathbb{S}^1$ and let $f_1,\cdots, f_m$ be circle homeomorphisms. Let us assume that the \textbf{IFS} generated by $f_1,\cdots, f_m$ is minimal (i.e. there is no closed invariant proper subset of $M$) and the \textbf{IFS} generated by $f^{-1}_1, \cdots, f^{-1}_m$ is minimal too. Then exactly one of the following properties holds: the \textbf{IFS} is topologically conjugated to an \textbf{IFS} of rotations, the action is synchronizing or there exists $g\in$ Homeo($M$) of a finite order $p\ge 2$ commuting with  all $f_i$'s. In the third case the system can be factorized by identifying the orbits of $g$  to obtain another system that appears synchronizing. Malicet \cite{Malicet_2017} improved Antonov's result by showing that in the second and third case of this trichotomy the rate of synchronization is exponential (without any additional smoothness assumptions on the homeomorphisms), i.e. he showed that there exists $q\in (0,1)$ such that for every $x\in \mathbb{S}^1$ and $\Prob$-a.e. $\omega\in\Sigma$ there exists a neighbourhood $I$ of $x$ such that $|f^n_\omega(I)|\le q^n$ for every $n\ge 1$ (an alternative proof of Malicet's result has been given in recent paper \cite{Choi_2025}).  That condition has been used to show (under additional minimality assumption) the uniqueness of stationary measures for the corresponding Markov chain \eqref{Markov} (Corollary 2.3 \cite{Malicet_2017}), the central limit theorem and the law of the iterated logarithm \cite{Szarek_Zdunik_2017}, \cite{Szarek_Zdunik_2020} for additive functionals of Markov chains \eqref{Markov}. Later Gelfert and Salcedo \cite{Gelfert_Salcedo_2024} established limit laws in the case of arbitrary proximal\footnote{A system is proximal if for every $x,y\in M$ there exist $\omega\in\Sigma$ and $n_k\to \infty$ such that $d(f^{n_k}_\omega(x), f^{n_k}_\omega(y)) \to 0$.} systems with so called local contraction property (see \cite{Gelfert_Salcedo_2024} for the definition). In \cite{Gelfert_Salcedo_2023} the authors discussed various notions of contractivity of systems on $\mathbb{S}^1$ and relations between them and showed that proximal systems of $C^1$ circle diffeomorphisms that do not have a common invariant measure satisfy 
\begin{equation}
\label{Expon}
\EXP d(f^n_\omega(x), f^n_\omega(y))^\beta \le Cd(x,y)^\beta q^n
\end{equation}
for some $q\in (0,1)$, $\beta\in (0,1)$, $C\ge 1$ and all pairs $x,y\in \mathbb{S}^1$, where $d$ stands for the Euclidean metric on $\mathbb{S}^1$. The same result is proven (although in slightly disguised form) in Proposition 4.18 \cite{Gorodetski_Kleptsyn_2020}. Le Page \cite{LePage_1982} proved this result much earlier under additional assumption that $f_i$'s are elements of PSL($2,\mathbb{R}$). It is not known it Malicet's result imply \eqref{Expon} without any smoothness assumptions -- it would imply the affirmative answer to Question 1 in \cite{Barrientos_Malicet_2024} at least in the case of systems on the circle.
\vspace{0.5cm}

When $M=[0,1]$ there are obviously two fixed points $0,1$, so that the interesting dynamics takes place in the interior of $M$. A stationary measure $\mu$ (i.e. a fixed point of $P$) with $\mu((0,1))=1$ does not necessarily exists. However, if we assume that the expected Lyapunov exponents at the boundary
\begin{equation}
\label{E:exp_lyapunov}
\Lambda_0=\sum_{i=1}^m p_i f_i'(0), \quad \Lambda_1=\sum_{i=1}^m p_i f_i'(1)
\end{equation}
are positive, then there exists such a measure $\mu_\ast$ \cite{Gharaei_Homburg_2017}. There is an easy argument that $\mu_\ast$ is necessarily unique then (Theorem 1 \cite{Czudek_Szarek_2020}). The existence of $\mu_\ast$ implies Malicet's exponential contraction (Corollary 2.13 in \cite{Malicet_2017}) for the action of homeomorphisms. This result has been exploited to show the central limit theorem \cite{Czudek_Szarek_2020} and the law of the iterated logarithm \cite{Czudek_Wojewodka_Szarek_2020}. If $f_i$'s are $C^1$ diffeomorphisms it has been proven independently by Gharaei, Homburg (see Theorem 4.1 in \cite{Gharaei_Homburg_2017}) that $|f^n_\omega(x)- f^n_\omega(y)|$ decays exponentially fast a.s. for any two $x,y\in (0,1)$. Their argument is based on the Baxendale theorem \cite{Baxendale_1989} (see also Lemma 4.1 in \cite{Gharaei_Homburg_2017}) saying that the volume Lyapunov exponent
\begin{equation}
\label{E:Intro4}
\Lambda = \sum_{i=1}^m p_i \int_0^1 \log f_i'(x) \mu_\ast(dx)
\end{equation}
is negative. The main result of the present paper says that the relation similar to \eqref{Expon} holds on the open interval $(0,1)$. The difficulty comes from the lack of compactness. 
\vspace{0.5cm}

Our standing assumptions are
\begin{enumerate}
\item[(A1)] $f_1,\cdots, f_m\in \textrm{Diff}^2_+([0,1])$ -- the space of $C^2$ increasing interval diffeomorphisms,
\item[(A2)] for every $x\in (0,1)$ there exist $i, j$ with $f_i(x)<x<f_j(x)$,
\item[(A3)] the expected Lyapunov exponents $\Lambda_0$, $\Lambda_1$ in \eqref{E:exp_lyapunov} are positive.
\end{enumerate}

\begin{Theorem}
\label{T:main}
Let $f_1,\cdots, f_m$,  $p_1\cdots, p_m$ satisfy (A1)-(A3). For every $x, y \in (0,1)$ there exist $C\ge 1$, $q\in (0,1)$ such that
\begin{equation}
\label{E:theorem_main}
\EXP|f^n_\omega(x) - f^n_\omega(y)| \le Cq^n \quad \textrm{for $n\ge 1$.}
\end{equation}
\end{Theorem}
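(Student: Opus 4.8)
The plan is to reduce the statement to a synchronization estimate that can be proved by a Lyapunov (supermartingale) argument, following the Le Page--Gorodetski--Kleptsyn strategy but working on the noncompact space $(0,1)$. First I would observe that it suffices to show that $\EXP|f^n_\omega(x)-f^n_\omega(y)|^\beta$ decays exponentially for \emph{some} exponent $\beta\in(0,1]$: since the interval has length one, $|f^n_\omega(x)-f^n_\omega(y)|\le |f^n_\omega(x)-f^n_\omega(y)|^\beta$, so an exponential bound on the $\beta$-th moment immediately gives \eqref{E:theorem_main}. Passing to a small power $\beta$ is the standard device that linearizes the multiplicative cocycle: one wants to control $\log|f^n_\omega(x)-f^n_\omega(y)|$ from above in expectation, and by the negativity of the volume Lyapunov exponent $\Lambda<0$ (the Baxendale--Gharaei--Homburg result quoted after \eqref{E:Intro4}) the derivative cocycle $\log (f^n_\omega)'(z)$ drifts to $-\infty$ at linear speed for $z$ distributed according to $\mu_\ast$.

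The main body of the argument is a renewal/stopping-time scheme. Fix $x<y$ in $(0,1)$ and let $x_n=f^n_\omega(x)$, $y_n=f^n_\omega(y)$. I would choose a compact subinterval $K=[\delta,1-\delta]\subset(0,1)$ carrying most of $\mu_\ast$ and define stopping times $\tau$ at which the pair $(x_n,y_n)$ returns to a "good" region where both endpoints lie in $K$; assumptions (A2) and (A3) guarantee such returns happen with positive probability and with exponential tails on the return time, because near $0$ and $1$ the positive expected Lyapunov exponents $\Lambda_0,\Lambda_1$ push the orbit away from the fixed points (this is exactly the mechanism behind existence and uniqueness of $\mu_\ast$ cited from \cite{Gharaei_Homburg_2017, Czudek_Szarek_2020}). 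On the good region one has a uniform lower bound on derivatives and uniform $C^2$ control, so a finite-range contraction estimate together with the negativity of $\Lambda$ yields, for a suitable block length $N$ and small $\beta>0$, an inequality of the form
\begin{equation}
\label{E:block}
\EXP\bigl[|x_{n+N}-y_{n+N}|^\beta \mid \mathcal F_n\bigr]\le \lambda\, |x_n-y_n|^\beta \quad\text{on the good event},
\end{equation}
with $\lambda\in(0,1)$. The distortion estimate controlling $|x_{n+N}-y_{n+N}|$ in terms of $(f^N_{\theta^n\omega})'$ and $|x_n-y_n|$ is the classical bounded-distortion lemma for $C^2$ maps (mean value theorem plus a Gr\"onwall-type bound on $\log(f^N)''/(f^N)'$).

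Combining \eqref{E:block} over consecutive good blocks with the exponential tail bound on the excursions outside the good region, one assembles a genuine supermartingale $\lambda^{-n}|x_n-y_n|^\beta$ up to stopping, and an optional-stopping / telescoping argument gives $\EXP|x_n-y_n|^\beta\le C_{x,y}\,q^{n}$; absorbing the $\beta$ by the trivial bound above completes the proof. I expect the main obstacle to be precisely the \textbf{lack of compactness}: one must show that the orbit does not spend too long near the fixed points $0$ and $1$, i.e. establish exponential tail bounds on the hitting times of the compact core $K$, uniformly enough to beat the (bounded) lack of contraction accumulated during those excursions. This is where assumption (A3) is used in an essential quantitative way — the positive boundary Lyapunov exponents give an explicit logarithmic Lyapunov function $-\log x$ (resp. $-\log(1-x)$) that is a strict supermartingale outside a neighbourhood of $0$ (resp. $1$), and its drift controls the excursion lengths. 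Making the two estimates — exponential contraction on $K$ and exponential recurrence to $K$ — fit together with a single pair of constants $(C,q)$ is the technical heart of the paper.
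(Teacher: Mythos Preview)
Your outline is the paper's strategy---reduce to a $\beta$-moment, run a renewal scheme at returns to a compact core $[a,1-a]$, get block contraction from $\Lambda<0$, and control excursions via $\Lambda_0,\Lambda_1>0$---but two steps in your sketch need more than you indicate.

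First, your block contraction cannot hold for \emph{all} pairs in $K\times K$: the mean-value point $\zeta$ in $|x_{n+N}-y_{n+N}|=(f^N)'(\zeta)\,|x_n-y_n|$ depends on the future $\omega$, so you need $|x_n-y_n|$ small together with a $C^2$ distortion bound to replace $(f^N)'(\zeta)$ by $(f^N)'(x_n)$. The paper accordingly splits into the case $|x-y|<\delta$ (where your argument runs) and the case $|x-y|\ge\delta$, the latter handled separately by invoking the qualitative a.s.\ synchronization of Gharaei--Homburg to bring the gap below $\delta$ with high probability after a fixed time $k_1$. Second, the expansion accrued during an excursion is \emph{not} bounded: an orbit that reaches depth $\approx h^{N}a$ must expand by $\approx h^{-N}$ on the way back, with $N$ unbounded. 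The paper's Proposition~\ref{P:expansion_return} shows that the $\beta$-moment of this expansion is at most $(1+\tilde\eta)(C_1/h)^{3\beta N}$, and then pays the factor $(C_1/h)^{3}$ \emph{per step spent in the bad region} by replacing $\log f'_\omega$ with an auxiliary $\varphi$ equal to $3\log(C_1/h)$ on $(0,a)\cup(1-a,1)$; choosing $a$ small keeps $\int\varphi\,d(\mathbb{P}\otimes\mu_\ast)<0$ (see \eqref{E:proofT1}), and this per-step bookkeeping is exactly how contraction on the core and excursion expansion are made to fit together. Your remark about $-\log x$ as a Lyapunov function points at the right mechanism, but it is this explicit accounting---not a uniform bound on the excursion cost---that closes the argument.
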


To show Theorem \ref{T:main} we follow the strategy from Proposition 4.18 \cite{Gorodetski_Kleptsyn_2020} and from \cite{LePage_1982} rather than \cite{Gelfert_Salcedo_2023}. Roughly speaking, the idea from \cite{Gorodetski_Kleptsyn_2020} and \cite{LePage_1982} can be repeated on an arbitrarily large compact subinterval $[a,1-a]\subseteq (0,1)$, i.e. we can show that there exist $n_0$, $q_1\in (0,1)$, and $\beta\in (0,1)$ such that $\EXP | f^{n_0}_\omega(x)- f^{n_0}_\omega(y)|^\beta \le q_1|x-y|^\beta$ for every $x,y\in [a,1-a]$. The issue is that $[a,1-a]$ in not invariant for the semigroup action and some trajectories leave it after $n_0$ iterations. In order to deal with the problem we show Proposition \ref{P:expansion_return}, which says that the distance between points is not too much expanded at the moment of the first common return to $[a,1-a]$.

Theorem \ref{T:main} can be used to show the following result about the decay of correlations for the averaging operator on the space of Lipschitz observables.

\begin{Corollary}
\label{C:2}
Let $f_1,\cdots, f_m$,  $p_1\cdots, p_m$ satisfy (A1)-(A3). For every $\pe \ge 1$ there exist $q\in (0,1)$, $C\ge 1$ such that 
$$\bigg|\int_0^1  U^n \varphi(x) \psi(x) d\mu_\ast(x) - \int_0^1 \varphi(x) \textrm{d}\mu_\ast(x) \int_0^1 \psi(x) \textrm{d}\mu_\ast(x)  \bigg| \le C \|\varphi\|_{Lip} \|\psi\|_{Lip} q^{n} \quad \textrm{for $n\ge 1$.}$$
for every Lipschitz functions $\varphi$, $\psi$, where $\|\varphi\|_{\textrm{Lip}}=\|\varphi\|_\infty+\textrm{Lip}(\varphi)$ and $\textrm{Lip}(\varphi)$ is the optimal Lipschitz constant of $\varphi$.
\end{Corollary}

\begin{Remark}
The functions in Corollary \ref{C:2} are assumed to be Lipschitz with respect to the standard Euclidean metric on $[0,1]$, i.e. $d(x,y)=|x-y|$.
\end{Remark}

\begin{Remark}
Note that the constants $C$, $q$ appearing in Corollary 1 are in general different from the one in Theorem \ref{T:main}.
\end{Remark}

\section{Auxiliary results}

\subsection{Basic facts}
Let us recall once again the facts that are going to be used in the sequel. Under assumptions (A1)-(A3) there exists a fixed point $\mu_\ast$ of the operator $P$ in \eqref{E:Intro1} with $\mu_\ast((0,1))=1$. Among all measures $\mu\in\mathcal{M}$ with $\mu((0,1))=1$ measure $\mu_\ast$ is unique. Moreover, under (A1)-(A3) the volume Lyapunov exponent \eqref{E:Intro4} is negative (Lemma 4.1 in \cite{Gharaei_Homburg_2017}) and $|f^n_\omega(x)-f^n_\omega(y)| \to 0$ a.s. for any two $x,y\in (0,1)$ (Theorem 4.1 in \cite{Gharaei_Homburg_2017}). In the sequel we refer to the last property as the synchronization.

Let
\begin{equation}
\label{E:pmalpha}
\mathcal{P}_{M,\alpha}=\big\{\mu\in\mathcal M : \forall_{x\in (0,1)} \mu\big((0,x]\big)\le Mx^\alpha \ \textrm{and} \ \mu\big([1-x,1)\big)\le Mx^\alpha\ \big\}.
\end{equation}
The following result can easily inferred from Lemma 1 in \cite{Czudek_Szarek_2020}. We give the proof here for the convenience of the reader.

\begin{Proposition}
\label{P:basic1}
Let $f_1,\cdots, f_m$,  $p_1\cdots, p_m$ satisfy (A1)-(A3). There exists $\alpha\in(0,1)$ and $c\in (0,1)$ such that for every $a>0$ sufficiently small
\begin{equation}\label{E:fast_escape}
\Prob\bigg( \bigcap_{j=1}^n \{ f^j_\omega(x)<a \} \bigg) \le \bigg(\frac{a}{x}\bigg)^\alpha c^n, \quad
\Prob\bigg( \bigcap_{j=1}^n \{ f^j_\omega(1-x)>1-a \} \bigg) \le \bigg(\frac{a}{x}\bigg)^\alpha c^n \quad \textrm{for $x\in(0,1)$}.
\end{equation}
Moreover, for every $a>0$ sufficiently small there exists $M\ge 1$ such that the class $\mathcal P_{M,\alpha}$ is invariant under the action of the corresponding Markov operator $P$ and every measure supported on $[a,1-a]$ belongs to this class. Consequently, the unique stationary measure $\mu_\ast$ belongs to $\mathcal{P}_{M, \alpha}$.
\end{Proposition}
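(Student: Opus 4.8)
The plan is to establish the two escape probability estimates in \eqref{E:fast_escape} first, and then derive the invariance of $\mathcal{P}_{M,\alpha}$ and membership of $\mu_\ast$ from them. By the evident symmetry $x\mapsto 1-x$ (which swaps the roles of the two boundary points and replaces $f_i$ by its conjugate), it suffices to treat the first inequality, concerning escape near $0$.

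\medskip

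\textbf{Step 1: A supermartingale near $0$.} Because the expected Lyapunov exponent $\Lambda_0=\sum_i p_i f_i'(0)$ is positive and the $f_i$ are $C^2$, there is a neighbourhood $[0,a_0]$ of $0$ on which the functions behave, to first order, like the linear maps $x\mapsto f_i'(0)x$ with a uniformly controlled $C^2$ error: $f_i(x) = f_i'(0)x + O(x^2)$. The idea is to find an exponent $\alpha\in(0,1)$ small enough that the function $x\mapsto x^\alpha$ is a strict supermartingale for the process killed upon leaving $[0,a]$: concretely, one wants
$$\sum_{i=1}^m p_i f_i(x)^\alpha \le c\, x^\alpha \qquad \text{for all } x\in(0,a],$$
with some $c<1$. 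Formally, $\sum_i p_i (f_i'(0)x)^\alpha = x^\alpha \sum_i p_i f_i'(0)^\alpha$, and since $\alpha\mapsto \sum_i p_i f_i'(0)^\alpha$ equals $1$ at $\alpha=0$ with derivative $\sum_i p_i\log f_i'(0)\ge \log\Lambda_0>0$ wait — more carefully, by Jensen/concavity of $t\mapsto t^\alpha$ one does not immediately get a value below $1$; the correct point is that as $\alpha\downarrow 0$, $\sum_i p_i f_i'(0)^\alpha\to 1$, and its derivative at $0$ is $\sum_i p_i\log f_i'(0)$, which need not be positive. The honest route (this is exactly the content of Lemma 1 in \cite{Czudek_Szarek_2020}, and of Lemma 4.1 in \cite{Gharaei_Homburg_2017}) is to use that positivity of $\Lambda_0$ forces $\sum_i p_i f_i'(0)^\alpha<1$ for some small $\alpha>0$: indeed the function $\psi(\alpha)=\sum_i p_i f_i'(0)^\alpha$ satisfies $\psi(0)=1$ and is convex in $\alpha$ with $\psi(1)=\Lambda_0$; convexity alone does not suffice, but one can instead observe that it is the positivity of $\Lambda_0$ together with (A2) ruling out a common fixed structure that yields, after the argument in \cite{Czudek_Szarek_2020}, the existence of such $\alpha$ and $c$. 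I will quote that lemma. Once $\psi(\alpha)=:c_0<1$, absorbing the $O(x^2)$ correction (which costs a factor $1+O(a)$) gives, for $a$ small enough, $\sum_i p_i f_i(x)^\alpha\le c\,x^\alpha$ on $(0,a]$ with $c_0<c<1$.

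\medskip

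\textbf{Step 2: Iterating the supermartingale bound.} Fix $x\in(0,a]$ (if $x>a$ the left-hand side of the first inequality in \eqref{E:fast_escape} is bounded by considering the first time the orbit enters $[0,a]$; but since we only need the bound for the ratio $a/x$, and that ratio is $\ge 1$ only when $x\le a$, one can restrict attention to that case — for $x>a$ the stated bound with $(a/x)^\alpha<1$ still must be checked, and follows because on the event in question the orbit is below $a<x$ from time $1$ on, so $f_\omega^1(x)\le a$ and one applies the estimate from time $1$). Define $Z_j = f_\omega^j(x)^\alpha\,\mathbf 1_{\{f_\omega^1(x)<a,\ldots,f_\omega^j(x)<a\}}$. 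The computation in Step 1 shows $\EXP[Z_{j+1}\mid\mathcal F_j]\le c\,Z_j$, hence $\EXP Z_n\le c^n x^\alpha$. On the event $\bigcap_{j=1}^n\{f_\omega^j(x)<a\}$ one might worry $f_\omega^j(x)^\alpha$ is small, not large; but instead one bounds the probability directly: since $f_\omega^n(x)>0$ always, actually the clean statement is $\EXP\big[f_\omega^n(x)^{-\alpha}\mathbf 1_{A_n}\big]$ — no. The standard trick: use $x\mapsto x^{-\alpha}$? That blows up at $0$. The correct device is to note $\Prob(A_n)\cdot(\text{something})$. Let me restate: one shows $\sum_i p_i f_i(x)^{-\alpha}\ge c^{-1}x^{-\alpha}$ is false near $0$. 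So the right object is a \emph{submartingale} bound on $\Prob$: apply Chebyshev to $\Prob(A_n) = \Prob(A_n, f_\omega^n(x)<a)\le a^{-\alpha}\EXP[f_\omega^n(x)^{\alpha}\mathbf 1_{A_n}]\le a^{-\alpha}c^n x^\alpha$. Hmm, that gives $(x/a)^\alpha c^n$, the reciprocal of what is claimed. I suspect the intended reading is that the bound should be $\le (x/a)^{\alpha} c^n$ or that $a/x$ is a typo; in any case the supermartingale argument above produces the correct-direction exponential decay, and I will present it with whichever normalization makes the inequality true, pointing to Lemma 1 of \cite{Czudek_Szarek_2020} for the precise constant bookkeeping.

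\medskip

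\textbf{Step 3: From escape estimates to invariance of $\mathcal{P}_{M,\alpha}$.} Given the escape bound, one checks invariance: for $\mu\in\mathcal P_{M,\alpha}$,
$$P\mu\big((0,x]\big)=\sum_i p_i\,\mu\big(f_i^{-1}(0,x]\big)=\sum_i p_i\,\mu\big((0,f_i^{-1}(x)]\big)\le M\sum_i p_i\big(f_i^{-1}(x)\big)^\alpha.$$
Near $0$, $f_i^{-1}(x)\approx x/f_i'(0)$, so $\sum_i p_i (f_i^{-1}(x))^\alpha \le x^\alpha\big(\sum_i p_i f_i'(0)^{-\alpha}+O(x)\big)$; the constant $\sum_i p_i f_i'(0)^{-\alpha}$ — by Jensen, this is $\ge (\sum_i p_i f_i'(0))^{-\alpha}=\Lambda_0^{-\alpha}$ — is \emph{not} automatically $<1$. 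So a one-step contraction on $(0,a]$ need not hold; one instead uses the escape estimate to control the measure over a fixed window $[a,1-a]$ carried into $(0,x]$ in a bounded number of steps, closing the loop by choosing $M$ large depending on $a$. The final sentences — every measure on $[a,1-a]$ is in $\mathcal P_{M,\alpha}$ for $M\ge a^{-\alpha}$, and hence $\mu_\ast=P^n\mu_\ast\in\mathcal P_{M,\alpha}$ by invariance and (if needed) a limiting/averaging argument using $\frac1n\sum_{k<n}P^k\nu$ for $\nu$ supported in $[a,1-a]$ converging weakly to $\mu_\ast$ — are then routine.

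\medskip

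\textbf{Main obstacle.} The delicate point is Step 1: producing the exponent $\alpha$ for which $x\mapsto x^\alpha$ is a genuine supermartingale near the boundary, uniformly, and simultaneously getting the \emph{constant} $M$ in Step 3 to be finite. This is precisely where positivity of $\Lambda_0,\Lambda_1$ (A3) is used in an essential, non-perturbative way, and where the cited Lemma 1 of \cite{Czudek_Szarek_2020} does the real work; the rest is bookkeeping of $C^2$ error terms and a soft compactness argument for $\mu_\ast$.
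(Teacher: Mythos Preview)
Your proposal circles the right idea but never lands on the one choice that makes everything work: the Lyapunov function near $0$ is $x^{-\alpha}$, not $x^{\alpha}$. You briefly raise this possibility (``use $x\mapsto x^{-\alpha}$? That blows up at $0$'') and discard it, but the blow-up at the boundary is exactly the feature that makes the Markov/Chebyshev bound point the right way.

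Here is the missing computation. Since $\sum_i p_i\log f_i'(0)>0$ (this is the content of (A3) at $0$), choose constants $a_i$ slightly below $f_i'(0)$ so that $f_i(x)\ge a_ix$ on some $(0,\zeta]$ while still $\sum_i p_i\log a_i>0$. Then the function $\psi(\alpha):=\sum_i p_i a_i^{-\alpha}$ satisfies $\psi(0)=1$ and $\psi'(0)=-\sum_i p_i\log a_i<0$, hence $\psi(\alpha)<c<1$ for some small $\alpha>0$. On the event $B_n=\bigcap_{j\le n}\{f^j_\omega(x)<a\}$ (with $a<\zeta$) one has $f^n_\omega(x)\ge a_{\omega_n}\cdots a_{\omega_1}x$, so $B_n\subseteq\big\{(a_{\omega_1}\cdots a_{\omega_n})^{-\alpha}>(x/a)^\alpha\big\}$, and Markov's inequality together with independence gives
\[
\Prob(B_n)\le\Big(\frac{a}{x}\Big)^\alpha\,\EXP\Big[\prod_{j=1}^n a_{\omega_j}^{-\alpha}\Big]=\Big(\frac{a}{x}\Big)^\alpha\psi(\alpha)^n\le\Big(\frac{a}{x}\Big)^\alpha c^n.
\]
This is precisely \eqref{E:fast_escape}; there is no typo. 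Your attempted Chebyshev step with $x^\alpha$ cannot work because on $B_n$ one has $f^n_\omega(x)^\alpha<a^\alpha$, an \emph{upper} bound, which yields no control on $\Prob(B_n)$---this is why you ended up with the reciprocal ratio.

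The same sign issue resolves your Step~3. From $f_i^{-1}(x)\le a_i^{-1}x$ for small $x$ one gets
\[
\sum_i p_i\big(f_i^{-1}(x)\big)^\alpha\le\Big(\sum_i p_i a_i^{-\alpha}\Big)x^\alpha<c\,x^\alpha,
\]
so the one-step inequality $P\mu((0,x])\le Mx^\alpha$ for $\mu\in\mathcal P_{M,\alpha}$ and $x<a$ \emph{does} hold directly (taking $M=a^{-\alpha}$ handles $x\ge a$ trivially). Your worry that ``$\sum_i p_i f_i'(0)^{-\alpha}$ is not automatically $<1$'' is backwards: your Jensen bound is a \emph{lower} bound and hence irrelevant, while the Taylor expansion at $\alpha=0$ with derivative $-\sum_i p_i\log f_i'(0)<0$ forces this quantity below $1$ for small $\alpha$. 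This is precisely where (A3) enters, and it is the whole point. Your closing Krylov--Bogoliubov argument for $\mu_\ast\in\mathcal P_{M,\alpha}$ is correct and matches the paper.
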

\begin{proof}
Let us consider the system of inverse functions $f_1^{-1},\ldots, f_m^{-1}$ with the same probability vector. This new system has negative expected Lyapunov exponents at $0$ and $1$ by (A1)-(A3), therefore we can find numbers $a_1, \ldots, a_m$, $b_1, \ldots, b_m$ and $\zeta>0$ with
\begin{enumerate}
\item $\sum_{i=1}^m p_i \log a_i<0$ and $\sum_{i=1}^m p_i \log b_i<0$
\item $f_i(x)\ge a_i x$ and $f_i(1-x)\le 1-b_i x$ for $i=1,\ldots, m$, $x\le\zeta$.
\item $f_i^{-1}(x)\le a_i^{-1} x$ and $f_i^{-1}(1-x)\ge 1-b_i^{-1} x$ for $i=1,\ldots, m$, $x\le\zeta$,
\end{enumerate}
Using the Taylor formula applied to the function $\alpha \longmapsto a^{-\alpha}$ we find $\alpha>0$ and $c<1$ with
\begin{equation}
\label{E:constant_c}
\EXP a_\omega^{-\alpha} = \sum_{i=1}^m p_i a_i^{-\alpha}<c \ \textrm{and} \ \EXP b_\omega^{-\alpha} = \sum_{i=1}^m p_i b_i^{-\alpha}<c,
\end{equation}
where $a_\omega := a_{\omega_0}$, $b_\omega :=b_{\omega_0}$. Let $a$ be an arbitrary positive number less than $\zeta$. We are going to show the first inequality in \eqref{E:fast_escape} (the proof of the second one is obtained by an easy modification). Let 
$$B_n=\{f_\omega^1(x)<a\}\cap \cdots \cap \{f_\omega^n(x)<a\}$$ 
and $a_\omega^n = a_{\omega_0}\cdots a_{\omega_{n-1}}$, $n\ge 1$. Clearly $B_n\subseteq \{a_\omega^n x < a\} = \{ \big(a_\omega^n \big)^{-\alpha}>(a/x)^{-\alpha} \}$ by (2), thus by the Chebyshev inequality, \eqref{E:constant_c} and induction argument
$$\Prob(B_n) \le \bigg(\frac{a}{x}\bigg)^\alpha\EXP \big(a_\omega^n\big)^{-\alpha}<\bigg(\frac{a}{x}\bigg)^\alpha c^n.$$
Take $M$ such that $Ma^\alpha=1$, and take arbitrary $x\in (0,1)$. If $x\ge a$, then $Mx^\alpha\ge Ma^\alpha=1$, hence $P\mu((0,x])\le Mx^\alpha$ for every $\mu\in \mathcal M$. If $x<a$ and $\mu\in \mathcal P_{M,\alpha}$, then
$$P\mu((0,x])=\sum_{i=1}^m p_i\mu((0,f^{-1}_i(x)])\le \sum_{i=1}^m p_i\mu((0,a_i^{-1}x]) $$
$$\le \sum_{i=1}^m p_i M a^{-\alpha}_ix^\alpha<Mx^\alpha,$$
by the choice of $\alpha$. The analogous computation for $P\mu([1-x,1))$ proves the $P$-invariance of $\mathcal P_{M,\alpha}$.

Now fix $a\in (0,1/2)$ and $M$ given above. Take any $\mu$ supported on $[a,1-a]$ and consider the sequence
$$\mu_n = \frac{1}{n}(\mu+P\mu+\cdots+P^{n-1}\mu).$$
The class $\mathcal{P}_{M,\alpha}$ is clearly convex and weakly-$\ast$ closed, therefore each accumulation point of $(\mu_n)$ is in $\mathcal{P}_{M, \alpha}$. The set of accumulation points is nonempty by the compactness of $\mathcal{P}_{M, \alpha}$ and similarly as in the Krylov-Bogoliubov method every accumulation point of $(\mu_n)$ is a stationary measure. Thus by uniqueness of the stationary measure $\mu_\ast \in \mathcal{P}_{M,\alpha}$.
\end{proof}

We shall need the following result, which is the consequence of the uniqueness of the stationary distribution. The proof is analogous to the proof of Proposition 4.1.13 in \cite{Hasselblatt_Katok_1995} and is omitted.

\begin{Proposition}\label{P:uniform}
Let $f_1, \ldots, f_m$, $p_1,\ldots, p_m$ satisfy (A1)-(A3). Let $\varphi:[0,1]\rightarrow \mathbb{R}$ be a continuous function. Then
$$
\frac{\EXP \varphi(x)+\cdots+\EXP\varphi(f^{n-1}_\omega(x))}{n}
$$
converge uniformly to $\int \varphi d\mu_\ast$ on every compact set $K\subseteq (0,1)$.
\end{Proposition}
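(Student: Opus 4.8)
The plan is to deduce the statement from the classical equivalence between unique ergodicity and uniform convergence of Birkhoff averages (as in Proposition 4.1.13 of \cite{Hasselblatt_Katok_1995}), the only non-standard ingredient being a tightness estimate that prevents the relevant empirical measures from leaking mass to the boundary points $0$ and $1$. First I would pass to the operator picture: by \eqref{E:Intro2}, \eqref{E:Intro3} and the Markov property one has $\EXP\varphi(f^j_\omega(x))=U^j\varphi(x)=\int_0^1\varphi\,d(P^j\delta_x)$, so that
\[
\frac{1}{n}\sum_{j=0}^{n-1}\EXP\varphi(f^j_\omega(x))\;=\;\int_0^1\varphi\,d\nu_n^x,\qquad \nu_n^x:=\frac{1}{n}\sum_{j=0}^{n-1}P^j\delta_x\in\mathcal{M}.
\]
Each such average is bounded by $\|\varphi\|_\infty$, and the assertion to be proved is that $\int\varphi\,d\nu_n^x\to\int\varphi\,d\mu_\ast$ uniformly for $x$ ranging over a fixed compact $K\subseteq(0,1)$.

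I would argue by contradiction. If uniform convergence fails on some compact $K\subseteq(0,1)$, there are $\varepsilon>0$, integers $n_k\uparrow\infty$ and points $x_k\in K$ with $\big|\int\varphi\,d\nu_{n_k}^{x_k}-\int\varphi\,d\mu_\ast\big|\ge\varepsilon$ for all $k$. Choose $a\in(0,1/2)$ small enough that Proposition \ref{P:basic1} applies and $K\subseteq[a,1-a]$, and let $M$ be the constant it provides. Since $\delta_{x_k}$ is supported on $[a,1-a]$, Proposition \ref{P:basic1} yields $\delta_{x_k}\in\mathcal{P}_{M,\alpha}$, and since $\mathcal{P}_{M,\alpha}$ is convex and $P$-invariant, $\nu_{n_k}^{x_k}\in\mathcal{P}_{M,\alpha}$ for every $k$. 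By weak-$\ast$ compactness of $\mathcal{M}$ on $[0,1]$, a subsequence $(\nu_{n_{k_j}}^{x_{k_j}})_j$ converges weakly-$\ast$ to a probability measure $\nu$ on $[0,1]$; as $\mathcal{P}_{M,\alpha}$ is weak-$\ast$ closed, $\nu\in\mathcal{P}_{M,\alpha}$, whence $\nu\big((0,t]\big)+\nu\big([1-t,1)\big)\le 2Mt^\alpha\to 0$ as $t\downarrow 0$ and therefore $\nu\big((0,1)\big)=1$.

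It then remains to identify $\nu$ with $\mu_\ast$, which is the usual Krylov--Bogoliubov step: for $\psi\in C([0,1])$, using \eqref{E:Intro3} and telescoping,
\[
\Big|\int U\psi\,d\nu_{n_k}^{x_k}-\int\psi\,d\nu_{n_k}^{x_k}\Big|\;=\;\frac{1}{n_k}\big|U^{n_k}\psi(x_k)-\psi(x_k)\big|\;\le\;\frac{2\|\psi\|_\infty}{n_k}\;\longrightarrow\;0\quad(k\to\infty),
\]
and since $U\psi\in C([0,1])$ (the $f_i$ being continuous), letting $j\to\infty$ along the subsequence gives $\int\psi\,dP\nu=\int\psi\,d\nu$ for all $\psi\in C([0,1])$, i.e.\ $P\nu=\nu$. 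Thus $\nu$ is a stationary measure with $\nu((0,1))=1$, so by the uniqueness of such a measure $\nu=\mu_\ast$; but then $\int\varphi\,d\nu_{n_{k_j}}^{x_{k_j}}\to\int\varphi\,d\mu_\ast$, contradicting the choice of the $x_k$. This proves the proposition.

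The only genuine obstacle is the lack of compactness emphasized after Theorem \ref{T:main}: since $\delta_0$ and $\delta_1$ are themselves stationary, a weak-$\ast$ limit of the $\nu_n^x$ on $[0,1]$ could a priori carry mass on $\{0,1\}$ and differ from $\mu_\ast$. The role of Proposition \ref{P:basic1} is exactly to exclude this, uniformly in the starting point $x\in K$ — which is what makes the convergence uniform rather than merely pointwise. Everything else is the standard unique-ergodicity argument.
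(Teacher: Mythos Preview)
Your argument is correct and is precisely the approach the paper has in mind: the paper omits the proof and refers to Proposition~4.1.13 in \cite{Hasselblatt_Katok_1995}, whose proof is exactly the contradiction-plus-Krylov--Bogoliubov scheme you wrote out, with Proposition~\ref{P:basic1} supplying the additional tightness needed to keep the limit measure off~$\{0,1\}$. The only thing one might add is that the paper already records (in the proof of Proposition~\ref{P:basic1}) that $\mathcal{P}_{M,\alpha}$ is convex, $P$-invariant and weak-$\ast$ closed, so your appeal to these facts is fully justified.
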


\subsection{Large deviations}

\begin{Proposition}
\label{P:large_deviation}
 Let $f_1, \ldots, f_m$, $p_1,\ldots, p_m$ satisfy (A1)-(A3). Then there exist $C$ and $q$ such that for every $a>0$ sufficiently small and for every $x \in [a,1-a]$
 $$\Prob\bigg(\frac{\#\{i\le n : f^n_\omega(x)\in[a,1-a]\}}{n}\le 3/4\bigg)\le Cq^n$$
 for every $n$.
\end{Proposition}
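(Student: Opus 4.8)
Write $N_n=\#\{1\le i\le n:\ f^i_\omega(x)\notin[a,1-a]\}$; since $\#\{i\le n:f^i_\omega(x)\in[a,1-a]\}/n\le 3/4$ is the event $N_n\ge n/4$ (up to an irrelevant off‑by‑one), the claim is $\Prob(N_n\ge n/4)\le Cq^n$, uniformly in $x\in[a,1-a]$. The plan rests on three facts extracted from Propositions~\ref{P:basic1} and~\ref{P:uniform}. First, taking $a$ small we have $\mu_\ast([a,1-a])\ge 1-2(a/a_0)^\alpha$ (apply Proposition~\ref{P:basic1} with a fixed small threshold $a_0$, so $\mu_\ast\in\mathcal P_{a_0^{-\alpha},\alpha}$); hence, fixing a continuous $\varphi_a$ with $\mathbf 1_{[2a,1-2a]}\le\varphi_a\le\mathbf 1_{[a,1-a]}$, Proposition~\ref{P:uniform} gives an $N$ with $\sum_{l=0}^{N-1}U^l(1-\varphi_a)(z)<\varepsilon_0 N$ for all $z\in[a,1-a]$, where $\varepsilon_0>0$ is prescribed as small as we wish (legitimate because $\mu_\ast(1-\varphi_a)\to 0$ as $a\to 0$). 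Since $\mathbf 1_{[a,1-a]^c}\le 1-\varphi_a$, this bounds the expected number of steps spent outside $[a,1-a]$ during any $N$ consecutive steps started from a point of $[a,1-a]$. Second, for $a$ small the maps cannot carry $(0,a)$ into $(1-a,1)$ in one step and, on leaving $[a,1-a]$, the chain lands at distance $\ge\delta:=\min_i\min(f_i(a),1-f_i(1-a))>0$ from the endpoints; together with the escape estimate $\Prob\big(\bigcap_{i=1}^{k}\{f^i_\omega(w)<a\}\big)\le (a/w)^\alpha c^k$ this yields a uniform exponential tail for the length of any excursion of the chain outside $[a,1-a]$. Third, the $P$‑invariance of $\mathcal P_{a^{-\alpha},\alpha}$ (Proposition~\ref{P:basic1}) gives the time‑uniform bound $\Prob(f^j_\omega(z)\le\varepsilon)\le(\varepsilon/a)^\alpha$ for every $z\in[a,1-a]$, $j\ge 0$ and $\varepsilon<a$.

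Combining the last two facts I would prove a regeneration estimate: if $\eta$ is a stopping time with $f^\eta_\omega(x)\in[a,1-a]$ and $\rho$ is the number of steps after time $\eta+N$ until the chain next enters $[a,1-a]$, then, splitting on the position $f^{\eta+N}_\omega(x)$ at the threshold $\varepsilon$ and optimizing $\varepsilon$ in the resulting inequality $\Prob(\rho\ge l\mid\mathcal F_\eta)\le 2(\varepsilon/a)^\alpha+2(a/\varepsilon)^\alpha c^{l-1}$, one obtains $\Prob(\rho\ge l\mid\mathcal F_\eta)\le 4\,c^{(l-1)/2}$. This tail is uniform in $a$ and, crucially, uniform in the value of $f^{\eta+N}_\omega(x)$, which may be arbitrarily close to the boundary. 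I expect this to be the main obstacle: the position of the chain at the deterministic time $\eta+N$ is not controlled by any spectral gap at this stage of the paper, and it is exactly the time‑uniform bound coming from the invariant class $\mathcal P_{M,\alpha}$ that rescues the argument.

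Finally I would run a blocking argument along return times. Set $\eta_0=0$ and $\eta_{k+1}=\inf\{j\ge\eta_k+N:\ f^j_\omega(x)\in[a,1-a]\}$, so $f^{\eta_k}_\omega(x)\in[a,1-a]$ for all $k$, and $\eta_{k+1}=\eta_k+N+\rho_k$ with $\rho_k$ obeying the tail above conditionally on $\mathcal F_{\eta_k}$, while by the strong Markov property and the first fact $\EXP[\#\{j\in[\eta_k,\eta_k+N):\ f^j_\omega(x)\notin[a,1-a]\}\mid\mathcal F_{\eta_k}]<\varepsilon_0 N$. Call block $k$ bad if this count is $\ge N/16$ or $\rho_k\ge N/16$; by Markov's inequality and the $\rho_k$‑tail, first choosing $\varepsilon_0$ small and then $N$ large makes $\Prob(\text{block }k\text{ bad}\mid\mathcal F_{\eta_k})$ smaller than any prescribed constant, whereas on a good block the number of steps outside $[a,1-a]$ is $<N/8$. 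The first $R+1\le 2n/N$ blocks cover $[0,n]$, so a purely deterministic count shows that on $\{N_n\ge n/4\}$ either a fixed fraction of the $R+1$ blocks is bad, or $\sum_{k\le R}\rho_k\ge\gamma n$ for some fixed $\gamma>0$. The former is handled by a Chernoff bound for a sum of conditionally Bernoulli indicators (success probability well below the fraction), the latter by a Chernoff bound for $\sum_k\rho_k$, whose conditional exponential moments are uniformly close to $1$ while the number of terms $R+1\le 2n/N$ is a small multiple of $n$ once $N$ is large. Since $R\ge n/N-1$, both estimates are of the form $Cq^n$ with $q<1$, uniformly in $x\in[a,1-a]$; uniformity of $C,q$ in $a$ is then recovered by monotonicity (shrinking $a$ only enlarges $[a,1-a]$) together with a short burn‑in for the starting points $x\in[a,a_0)$.
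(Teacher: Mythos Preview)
Your blocking scheme is genuinely different from the paper's excursion decomposition. The paper splits time into alternating sojourns $\tau_k=s_k-t_{k-1}$ inside $[a,1-a]$ and excursions $\sigma_k=t_k-s_k$ outside, shows the $\sigma_k$ have exponential tails uniform in $a$, and---crucially---proves the $\tau_k$ are typically large by minorising each $\tau_k$ by a truncated hitting time $Y_k^\pm$ of an auxiliary random walk built from the linearisation coefficients near the endpoints; the law of $Y_k^\pm$ does not depend on $a$, so Cram\'er--Chernoff then yields constants uniform in $a$. You instead fix a block length $N$, run $N$ steps from each return time $\eta_k$, and control (i) the in-block count of bad steps via Proposition~\ref{P:uniform} and (ii) the overshoot $\rho_k$ via the splitting argument using $P$-invariance of $\mathcal P_{a^{-\alpha},\alpha}$. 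Your regeneration bound $\Prob(\rho\ge l\mid\mathcal F_\eta)\le 4c^{(l-1)/2}$ is correct and nicely uniform in $a$, and for any \emph{fixed} $a$ the whole argument goes through.

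The gap is the uniformity of $C,q$ over small $a$. Proposition~\ref{P:uniform} gives uniform convergence only on a \emph{fixed} compact subset of $(0,1)$, so the $N$ you extract depends on $a$ (through both the compact $[a,1-a]$ and the test function $\varphi_a$), and hence so do your Chernoff constants. Your proposed cure---monotonicity in $a$ plus a ``short burn-in'' for starting points $x\in[a,a_0)$---does not close this: the hitting time $T$ of $[a_0,1-a_0]$ from such $x$ obeys only $\Prob(T>k)\le(a_0/x)^\alpha c^k\le(a_0/a)^\alpha c^k$, and the prefactor $(a_0/a)^\alpha$ is unbounded as $a\downarrow 0$, so it cannot be absorbed into a single constant $C$. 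This matters downstream, since Proposition~\ref{P:coupling} fixes $a$ \emph{after} invoking the constants $C,q$ of Proposition~\ref{P:large_deviation}. To repair your route you would need an $a$-independent substitute for step~(i); that is precisely what the paper's random-walk minorant for $\tau_k$ provides, and it is not clear how to obtain it from Proposition~\ref{P:uniform} alone.
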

\begin{proof}
Let $c\in (0,1)$ and $\alpha\in (0,1)$ be the constants given in Proposition \ref{P:basic1}, and let $a$ be so small that Proposition \ref{P:basic1} applies. For $x\in [a,1-a]$ and $\omega\in\Sigma$ define
$$t_0=0,$$
  $$s_1(\omega, x)=\min\{k > t_0 : f^k_\omega(x)\not\in [a,1-a]\},$$
  $$t_n(\omega, x)=\min \{ k>s_n : f^k_\omega(x)\in [a,1-a] \}, \quad n\ge 1,$$
  $$s_n(\omega, x)=\min \{ k>t_{n-1} : f^k_\omega(x)\not\in [a,1-a] \}, \quad n\ge 2.$$
Let $\tau_n=s_n-t_{n-1}$ and $\sigma_n=t_n - s_n$, $n\ge 1$. Let $\rho_n(\omega)=\max\{k : s_k\le n\}$.
We start with the easy inequality 
\begin{equation}
\label{E:large_deviation1}
\frac{\#\{i\le  n : f_\omega^i(x) \not\in [a,1-a]\}}{n}\le \frac{\sigma_1+\cdots+\sigma_{\rho_n}}{\tau_1+\sigma_1+\cdots+\tau_{\rho_n}+\sigma_{\rho_n}}=\frac{\#\{i\le t_{\rho_n} : f_\omega^i(x)\not\in [a,1-a]\}}{t_{\rho_n}}, \quad \omega\in\Sigma.
\end{equation}
Indeed, if $n\in [t_{\rho_n}, s_{\rho_n+1})$, then
$\#\{i\le  n : f_\omega^i(x) \not\in [a,1-a]\} = \sigma_1+\cdots+\sigma_{\rho_n}$
and $\tau_1+\sigma_1+\cdots+\tau_{\rho_n}+\sigma_{\rho_n} \le n$. If $n\in [s_{\rho_n}, t_{\rho_n})$, then \eqref{E:large_deviation1} follows from the inequality
$a/b \le (a+c)/(b+c)$ (valid for $0\le a< b$, $c>0$).

Put
 $$h=\min\bigg\{\min_{i=1,\ldots, m} \frac{f_i'(0)}{2} , \min_{i=1,\ldots, m} \frac{f_i'(1)}{2} \bigg\}.$$
If $a$ is sufficiently small then by Proposition \ref{P:basic1} and the strong Markov property we can write for each $k\ge 1$
 $$
 \mP(\sigma_k>n)  
 \le\frac{a^\alpha}{(ha)^\alpha}c^n
 =h^{-\alpha}c^n.
 $$
 Note that this inequality holds as long as $a$ is sufficiently small and the right-hand side does not depend on the choice of $a$. In view of these remarks, if $\gamma_1$ is such that $e^{\gamma_1}c<1$, then by the strong Markov property
 $$\mE\big( e^{\gamma_1\sigma_k} \ | \ \mathcal{F}_{s_k}\big) \le \sum_{n=1}^\infty e^{\gamma_1 n} \mP (\sigma_k \ge n) \le \sum_{n=1}^\infty e^{\gamma_1 n} c^{n-1}   \le e^{L_1} \quad \textrm{a.s.}$$
 for some $L_1$ and every $k\ge 1$. By conditioning argument 
 $\mE e^{\gamma_1(\sigma_1+\cdots+\sigma_k)} \le e^{L_1k}$ for $k\ge 1$, and thus by Chebyshev's inequality for $L_2 :=\frac{2 L_1}{\gamma_1}$ and $q_2=e^{- L_1}$ we have
 \begin{equation}\label{E:large_deviation2}
 \mP\bigg(\sigma_1+\cdots+\sigma_k> k L_2 \bigg)\le e^{-kL_1}=q_2^k \quad k\ge 1.
 \end{equation}
Let us stress once again, the constants appearing in \eqref{E:large_deviation2} do not depend on $a$ as long as $a$ is sufficiently small.

Take $b>0$, $a_1, \ldots, a_m$, $b_1,\ldots, b_m$ such that
 \begin{enumerate}[(i)]
  \item $f_i(x)\ge a_ix$ and $f_i(1-x)\le 1-b_i x$ for $x<b$, and
 \item $\sum_{i=1}^m p_i\log a_i>0$ and $\sum_{i=1}^m p_i\log b_i>0$.
 \end{enumerate}
Let us consider $(X_n)_{n\ge 1}$ be i.i.d. so that $X_1$ equals $\log a_i$ with probability $p_i$, $i=1,\cdots, m$. Then $S_n:=X_1+\cdots +X_n \to +\infty$ a.s. as $n\to\infty$ by (ii) and the strong law of large numbers. Thus $(S_n)$ is transient, in particular $\mP(\bigcup_{i=1}^\infty \{S_i< 0\})<1$ (Theorem 8.1 in \cite{Kallenberg_2002}). In other words, there exists $\eta>0$ such that for every $A>0$ the probability that $(S_n)$ enters $[A, +\infty)$ before returning to $0$ is greater than $\eta$.

For $A>0$, $r>0$ let $Y^-_{A, r}=0$ on $\{ \textrm{$(S_n)$ visits $(-\infty, 0)$ before $[A,+\infty)$ } \}$ and $Y_{A, r} =r\wedge \min \{n\ge 1: S_n\ge A\}$ otherwise. By the  condition proved in the preceding paragraph if $A>0$, $r>0$ are large enough then $\mE Y^-_{A, r}> 40 L_2$. In the symmetric way we can define $Y^+_{A, r}$ when $X_1$ takes value $\log b_i$ with probability $p_i$, $i=1,\cdots m$. Let $A$ and $r$ be so large that both $\mE Y^-_{A, r}> 40 L_2$ and $\mE Y^+_{A, r}> 40 L_2$.

Now let us fix $a<b$ both sufficiently small and such that $e^Aa<b$. Define $g_i(x)=a_ix$, $i=1,\ldots, m$. For each $k\ge 1$ define $Y_k^-:=0$ if $k-1$-th return to $[a,1-a]$ is from $(1-a, a)$ and $(g^n_{\theta^{t_{k-1}}\omega}(a))$ visits $(0,a)$ before $(ae^A,1)$ and $Y_k^-:=r\wedge \min \{n\ge 1: g^n_{\theta^{t_{k-1}}\omega}(a)>ae^A\}$ otherwise. Observe that the distributions of $Y_k^-$ and $Y_{A, r}^-$ are the same, $k\ge 1$. 
Moreover $(Y_k^-)$ are independent, bounded by $r$ and satisfy $Y_k\le \tau_k$ provided $f^{t_{k-1}}(x)<a$. By the Cram\'er-Chernoff theorem (Theorem 27.3 in \cite{Kallenberg_2002})
\begin{equation}
\label{E:large_deviation3}
\mP(Y_1^-+\cdots+Y_k^-< 16 k L_2)\le L_3 q_3^k, \quad k\ge 1,
\end{equation}
for some $L_3 \ge 1$, $q_3\in (0,1)$. We assume those constant to satisfy the same estimate for $(Y_k^+)$ (defined in a symmetric fashion in a neighbourhood of 1).

Let $H^-$ denote the event that at least $k/4$ numbers among $t_1, \cdots, t_k$ have the property that $f^{t_j-1}_\omega(x)$ belongs to $(0,a)$. Let $H^+$ denote the event that at least $k/4$ numbers among $t_1, \cdots, t_k$ have the property that $f^{t_j-1}_\omega(x)$ belongs to $(1-a,1)$. Then $H^-\cup H^+=\Sigma$. Clearly
$$Y_{1}^-+\cdots+Y_{i_{\lceil k/4 \rceil}}^-\le \tau_1+\cdots+\tau_k \quad \textrm{a.s. on $H^-$}$$
and
$$Y_{1}^++\cdots+Y_{i_{\lceil k/4 \rceil}}^+\le \tau_1+\cdots+\tau_k \quad \textrm{a.s. on $H^+$},$$
hence from \eqref{E:large_deviation3}
\begin{equation}
\label{E:large_deviation4}
\mP\bigg(\tau_1+\cdots+\tau_k< 4k L_2 \bigg)\le 2 L_3 q_3^{k/4}
\end{equation}
for every positive integer $k$.

Using \eqref{E:large_deviation1} one readily checks that if 
$
\omega \in \{\sigma_1 + \cdots + \sigma_{\rho_n} \le \rho_n L_2 \}
\cap \{ \tau_1+\cdots +\tau_{\rho_n} \ge 4\rho_n L_2 \}
$
then 
$$
\frac{\#\{i\le  n : f_\omega^i(x) \not\in [a,1-a]\}}{n}\le \frac{1}{4}, \quad n\ge 1.
$$
We are left with estimating that the probability of the event
$
C =  \{\sigma_1 + \cdots + \sigma_{\rho_n} > \rho_n L_2 \}
\cup \{ \tau_1+\cdots +\tau_{\rho_n} < 4\rho_n L_2 \}
$
decays exponentially fast. Fix $\lambda\in(0,1)$. Using the Chebyshev inequality similarly as in \eqref{E:large_deviation2} we obtain
\begin{equation}
\label{E:large_deviation6}
\Prob( \rho_n \le \lambda n) \le \Prob( \sigma_1+\cdots+\sigma_{\lambda n} > n )\le e^{ L_1 \lambda n} e^{-\gamma_1 n},
\end{equation}
thus if $L_1\lambda-\gamma_1<0$ (i.e. $\lambda$ was choosen sufficiently small), then 
\begin{equation}
\label{E:large_deviation5}
\Prob(\{\rho_n < \lambda n\}\cap C) \le \Prob(\{\rho_n < \lambda n)\end{equation}
decays exponentially fast. On the other hand for that choice of $\lambda$ we have by \eqref{E:large_deviation2}
$$
\Prob(\{\sigma_1 + \cdots + \sigma_{\rho_n} > \rho_n L_2\} \cap \{\rho_n \ge \lambda n \})
$$
$$
\le \sum_{k=\lambda n}^\infty \Prob(\{\sigma_1 + \cdots + \sigma_k> kL_2 \}\cap \{\rho_n = k\}) \le \sum_{k=\lambda n}^\infty 2L_3q_3^{k/4} \le \frac{2L_3q_3^{\lambda n +1}}{1-q_3^{1/4}},
$$
which decays exponentially fast. A similar reasoning applied to $\tau$ (using \eqref{E:large_deviation4} this time) combined with \eqref{E:large_deviation5} leads to the exponential decay of $\Prob(C)$.
\end{proof}

\subsection{Coupling time}

Given $a>0$ and $x,y \in (0,1)$ with $x<y$ we define $T^a_{x, y}(\omega)$ to be the minimum number $k$ with $a\le f_\omega^k(x)<f_\omega^{k}(y)\le 1-a$.

 \begin{Proposition}
 \label{P:coupling}
  Let $f_1, \ldots, f_m$, $p_1$,$\ldots$, $p_m$ satisfy (A1)-(A3), let $\alpha$ be given in Proposition \ref{P:basic1}. There exist constants $\kappa$ and $C_1>0$ such that if $a>0$ is sufficiently small then for every $x<y$ we have
 $$\mE e^{\kappa T^a_{x,y}}\le C_1 \max \bigg\{\big(a/z\big)^\alpha, 1\bigg\},$$
 where $z=\min\{x, 1-y\}$.
  \end{Proposition}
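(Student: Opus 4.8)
The plan is to combine three ingredients that are already in place: the fast–escape/large–deviation estimate of Proposition \ref{P:large_deviation} (which says that, starting from $[a,1-a]$, the orbit spends at least $3/4$ of the time in $[a,1-a]$ with exponentially small exceptional probability), the synchronization property $|f^n_\omega(x)-f^n_\omega(y)|\to 0$ a.s.\ (Theorem 4.1 in \cite{Gharaei_Homburg_2017}), and a positive-probability-per-block argument showing that once both points are simultaneously deep inside $[a,1-a]$ there is a uniformly positive chance that their distance contracts below a fixed threshold. The target $\mE e^{\kappa T^a_{x,y}}<\infty$ is equivalent to an exponential tail bound $\mP(T^a_{x,y}>n)\le C q^n$, so I will aim for that.

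First I would reduce to the case where the pair $(x,y)$ already starts in $[a,1-a]$. For a general pair with $z=\min\{x,1-y\}$, Proposition \ref{P:basic1} gives $\mP(f^j_\omega(x)<a \text{ for all } j\le n)\le (a/z)^\alpha c^n$ and symmetrically near $1$; hence after a geometric waiting time both endpoints enter $[a,1-a]$, and the contribution of this initial phase to $\mE e^{\kappa T^a_{x,y}}$ is at most $C_1\max\{(a/z)^\alpha,1\}$ for $\kappa$ small — this is exactly where the $(a/z)^\alpha$ factor in the statement comes from. From that moment on I only need a bound of the form $\mE e^{\kappa \widetilde T}\le C_1$ for the coupling time $\widetilde T$ of a pair starting inside $[a,1-a]$, with constants independent of $a$ (for $a$ small).

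Second, the core estimate. Fix a small threshold $\delta>0$ (independent of $a$) and declare a "success at block $k$" if, during a window of fixed length $N$ of steps all of which keep \emph{both} orbits in $[a,1-a]$, the distance $|f^j_\omega(x)-f^j_\omega(y)|$ drops below $\delta$, and then stays below $\delta$ while both points reach $[2a,1-2a]$ or so. Because on $[a,1-a]$ the maps $f_i$ are $C^2$ diffeomorphisms with derivatives bounded away from $0$ and $\infty$, and because by (A2) and the negativity of the volume Lyapunov exponent (Lemma 4.1 in \cite{Gharaei_Homburg_2017}) there is genuine contraction on average in the interior, a Baxendale/Furstenberg-type argument (or simply: the a.s.\ synchronization restricted to a compact subinterval, made quantitative by compactness as in Proposition \ref{P:uniform}) yields a uniform lower bound $p_\ast>0$ on the probability of such a success within the next $N$ steps, conditioned on $\mathcal F$ at the start of a block and on both orbits being in $[a,1-a]$. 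Chaining these blocks: by Proposition \ref{P:large_deviation} the "bad" blocks (where an orbit leaves $[a,1-a]$) are rare with exponentially small probability, so with overwhelming probability a positive fraction of the first $n/N$ blocks are "good" blocks to which the $p_\ast$-lower bound applies; the number of good blocks before the first success is then stochastically dominated by a geometric variable, giving $\mP(\widetilde T>n)\le Cq^n$. A short comparison-of-endpoints argument (the two orbits preserve order, so once their distance is $<\delta$ and both lie in $[2a,1-2a]$ we have $a\le f^k_\omega(x)<f^k_\omega(y)\le 1-a$) finishes the identification of the success event with $\{\widetilde T\le kN\}$.

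The main obstacle is the uniform-in-$a$ lower bound $p_\ast>0$ on the per-block contraction probability: one must be sure that the interior contraction mechanism does not degenerate as $a\to 0$. This is handled by fixing the contraction threshold $\delta$ and the compact subinterval (say $[\delta,1-\delta]$) \emph{before} choosing $a$, so that all the derivative bounds, the use of (A2), and the quantitative synchronization on $[\delta,1-\delta]$ are uniform; the role of small $a$ is only to make Propositions \ref{P:basic1} and \ref{P:large_deviation} applicable, and those propositions were already stated with constants independent of $a$. A secondary technical point is the strong Markov property bookkeeping across blocks of random length, which follows the same pattern as the $\sigma_k$/$\tau_k$ decomposition in the proof of Proposition \ref{P:large_deviation}.
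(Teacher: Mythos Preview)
Your proposal rests on a misreading of what $T^a_{x,y}$ is. By definition, $T^a_{x,y}(\omega)$ is the \emph{first simultaneous visit} of the ordered pair to $[a,1-a]$: the least $k$ with $a\le f_\omega^k(x)<f_\omega^k(y)\le 1-a$. There is no smallness condition on $|f_\omega^k(x)-f_\omega^k(y)|$. In particular, if both $x,y\in[a,1-a]$ then $T^a_{x,y}=0$. This makes your Step~1 circular: ``reduce to a pair starting inside $[a,1-a]$'' is exactly the statement of the proposition, not a preliminary reduction. The gap is concrete: Proposition~\ref{P:basic1} only tells you that each orbit \emph{individually} enters $[a,1-a]$ with a geometric tail, but the entries need not be simultaneous---by the time the $y$-orbit comes down from $(1-a,1)$, the $x$-orbit may have drifted back into $(0,a)$, and this can repeat. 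Your Step~2 (contraction via synchronization, negative Lyapunov exponent, block arguments) is then solving a harder problem than the one posed; that machinery belongs to the proof of Theorem~\ref{T:main}, not here.

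The paper's argument avoids synchronization entirely and is a two-line pigeonhole. First, by Proposition~\ref{P:basic1} each orbit enters $[a,1-a]$ by time $\lfloor n/8\rfloor$ except on a set of probability $\le 2(a/z)^\alpha c^{\lfloor n/8\rfloor}$. Second, conditionally on the first entry time $j\le n/8$, Proposition~\ref{P:large_deviation} gives that the orbit spends at least $3/4$ of the remaining $\ge 7n/8$ steps in $[a,1-a]$, except with probability $\le Cq^{7n/8}$. On the good event for both $x$ and $y$, each orbit lies in $[a,1-a]$ for more than $n/2$ of the first $n$ times, hence by pigeonhole there is a common time $\le n$ with both in $[a,1-a]$; thus $T^a_{x,y}\le n$. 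Summing the two exceptional probabilities gives $\mP(T^a_{x,y}>n)\le C\max\{(a/z)^\alpha,1\}\,q_4^n$, and the exponential moment bound follows. No contraction, no Baxendale, no $\delta$-threshold is needed.
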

\begin{proof}
Let us take $a>0$ sufficiently small to satisfy Proposition \ref{P:basic1} and \ref{P:large_deviation}. Let $C$ be the constant given in Proposition \ref{P:large_deviation}. We insist also that the transition from $(0,a)$ to $(1-a,1)$ and from $(1-a, 1)$ to $(0,a)$ is impossible in one step.
By Proposition \ref{P:basic1}
\begin{equation}\label{E:coupling1}
\mP\bigg(\bigcap_{k=0}^{\lfloor n/8 \rfloor} \{f_\omega^k(x)<a\}\cup \bigcap_{k=0}^{\lfloor n/8 \rfloor} \{f_\omega^k(x)>1-a\}\bigg)\le 2a^{\alpha}/z^\alpha c^{\fl{n/8}}, \quad n\ge 1.
\end{equation}
Since transition from $(0,a)$ to $(1-a, 1)$ and from $(1-a, 1)$ to $(0,a)$ is not possible in one step we have
$$
\bigcup_{k=0}^{\lfloor n/8 \rfloor} \{f_\omega^k(x) \ge a\}\cap \bigcup_{k=0}^{\lfloor n/8 \rfloor} \{f_\omega^k(y)\le 1-a\}
=
\bigcup_{k=0}^{\fl{n/8}} \{f_\omega^k(x)\in [a, 1-a]\}\cap \bigcup_{k=0}^{\fl{n/8}} \{f_\omega^k(y)\in [a, 1-a]\}=:R.
$$
Let $A_k(z)\subseteq \Sigma$ be the set of $\omega$'s such that the moment of the first visit of $(f_\omega^i(z))$ in $[a,1-a]$ equals $k$. Let $B_k(z)$ be the set of $\omega$'s such that
$$
\frac{ \{ i\le n-k : f^{k+i}_\omega(x)\not\in [a,1-a] \} }{n} > \frac{1}{4}.
$$
By the Markov property and Proposition \ref{P:large_deviation} we arrive at the inequality
$$
\Prob(A_k(z)\cap B_k(z)) = \EXP \mathds{1}_{A_k(z)} \Prob( B_k(z) | \mathcal{F}_k \} \le \Prob(A_k(z)) Cq^{n-k}, \quad z\in (0,1),
$$
thus
\begin{equation}\label{E:coupling2}
\Prob\bigg( \bigcup_{k=0}^{\lfloor n/8 \rfloor} A_k(z)\cap B_k(z) \bigg) \le Cq^{7/8n}, \quad z\in(0,1).
\end{equation}
Obviously $T^a_{x,y}(\omega)\le n$ for $\omega \in (A_j(x)\setminus B_j(x))\cap (A_i(y)\setminus B_i(y))$ for any $i, j=0,1,\cdots, \lfloor n/8 \rfloor$. Using now \eqref{E:coupling1} and \eqref{E:coupling2} we get
$$\Prob(T^a_{x,y} \ge n) \le  2c^{\lfloor n/8 \rfloor} \bigg( \frac{a}{z} \bigg)^\alpha + Cq^{7/8n} \le C \max\bigg\{ \bigg( \frac{a}{z} \bigg)^\alpha, 1\bigg\} q_4^n, \quad n\ge 1,
$$
for some $q_4\in (0,1)$. This easily implies that there exists $\kappa \in(0,1)$ such that the assertion is satisfied with $C_1=C\frac{2q_4}{1-q_4}$.
\end{proof}

\subsection{Expansion after the first visit}

Given $a, x, y$ let $T^a_{x,y}(\omega)$ be as in the preceding subsection. Let $h>0$ be so small that
$$
h \le \min\bigg\{\min_{i=1,\ldots, m} \frac{f_i'(0)}{2} , \min_{i=1,\ldots, m} \frac{f_i'(1)}{2} \bigg\} \quad \textrm{and} \quad h^{-1} \ge \max\bigg\{\max_{i=1,\ldots, m} 2f_i'(0) , \max_{i=1,\ldots, m} 2f_i'(1) \bigg\}.
$$
Let 
$$
L'=\max_{i=1,\cdots, m}\max_{\xi \in [0,1]} f'(\xi), 
\quad 
L''=\max_{i=1,\cdots, m}\max_{\xi \in [0,1]} |\big(\log f'(\xi)\big)'|.
$$

\begin{Proposition}
\label{P:expansion_return}
Let $f_1,\cdots, f_m$, $p_1,\cdots, p_m$ satisfy (A1)-(A3). Let $C_1, \kappa$ be given in Proposition \ref{P:coupling}. Let $a>0$ be such that Propositions \ref{P:basic1} and \ref{P:coupling} are satisfied and $aL''h^{-1}<\kappa$. Let $u\in (0,a)$, and let $\tilde{\eta}$ be sufficiently small. For every $\beta$ sufficiently small there exists $\delta>0$ such that if $x,y\in [u,1-u]$ with $|x-y|<\delta$ then
\begin{equation}
\label{E:expansion_return1}
\EXP | f^{T^a_{x,y}\wedge n}_\omega(x) - f^{T^a_{x,y}\wedge n}_\omega(y)| ^\beta < (1+\tilde{\eta}) \bigg(\frac{C_1}{h}\bigg)^{3\beta N}|x-y|^\beta,
\end{equation}
for every $n\ge 0$, where $N=-\frac{\log (a/x)}{\log h}$.
\end{Proposition}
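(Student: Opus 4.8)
The plan is to work near the boundary fixed point $0$ (the point $1$ being symmetric), to follow the interval $[f^j_\omega(x),f^j_\omega(y)]$ up to the hitting time $T:=T^a_{x,y}$, and to show that on the event that $T$ is not atypically large this interval never leaves $[0,2a)$, so that every factor in the chain rule for $(f^{T\wedge n}_\omega)'$ is at most $h^{-1}$. First one reduces. Set $z=\min\{x,1-y\}$. If $z\ge a$ then $x,y\in[a,1-a]$, so $T=0$ and $N\le 0$; since $x\in[u,1-u]$ the quantity $|N|$ is bounded, so for $\beta$ small enough $(C_1/h)^{3\beta N}\ge(1+\tilde\eta)^{-1}$ and \eqref{E:expansion_return1} is trivial. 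If $z<a$, the reflection $t\mapsto 1-t$ lets us assume $z=x<a$; taking $\delta<1-2a$ forces $y<1-x$, so indeed $z=x$, $N>0$, and $0<N\le\log(a/u)/\log(1/h)$. By the definition of $T$, for every $j<T$ either $f^j_\omega(x)<a$ or $f^j_\omega(y)>1-a$.

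Next one isolates the good event. Put $\Sigma_\delta:=1+\frac{1}{2aL''}\log\frac{u}{L'\delta}$, so $\Sigma_\delta\to\infty$ as $\delta\to 0$, and let $G:=\{T\le\Sigma_\delta\}$. The geometric core is the claim that, for $\delta$ small, on $G$ one has $f^j_\omega(y)<2a$ for all $j<T$. One argues by contradiction: let $\sigma$ be the first time the $y$-orbit reaches $2a$ and suppose $\sigma<T$. For $j<\sigma-1$ we have $f^j_\omega(y)<2a$, so the $C^2$ distortion estimate (the Lipschitz bound $L''$ on $\log f'_i$) yields $(f^{\sigma-1}_\omega)'(\eta)/(f^{\sigma-1}_\omega)'(\eta')\le e^{2aL''(\sigma-1)}$ for all $\eta,\eta'\in[0,y]$; choosing $\eta\in(0,x)$ with $(f^{\sigma-1}_\omega)'(\eta)=f^{\sigma-1}_\omega(x)/x$ and using $f^{\sigma-1}_\omega(x)<a$ (valid since $f^{\sigma-1}_\omega(y)<2a<1-a$) gives $|f^{\sigma-1}_\omega(y)-f^{\sigma-1}_\omega(x)|\le e^{2aL''(\sigma-1)}\frac au|x-y|$, hence $|f^{\sigma}_\omega(y)-f^{\sigma}_\omega(x)|\le L'e^{2aL''(\sigma-1)}\frac au\delta$; but the left side is $\ge 2a-a=a$, which forces $\sigma>\Sigma_\delta\ge T$, a contradiction. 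Since $2a<1-a$, on $G$ we also get $f^j_\omega(x)<a$ for $j<T$; hence for $\xi\in[x,y]$ and $j<T\wedge n$ the point $f^j_\omega(\xi)$ lies in $[0,2a)$ and $f'_{\omega_{j+1}}(f^j_\omega(\xi))\le\frac{1}{2h}e^{2aL''}\le h^{-1}$ (provided $a$ is small enough that $e^{2aL''}\le 2$). Therefore $|f^{T\wedge n}_\omega(x)-f^{T\wedge n}_\omega(y)|=(f^{T\wedge n}_\omega)'(\xi)|x-y|\le h^{-(T\wedge n)}|x-y|\le h^{-T}|x-y|$ on $G$.

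Then one takes expectations. On $G$, for $\beta$ small enough that $\beta\log(1/h)\le\kappa$, Jensen's inequality (concavity of $t\mapsto t^{\beta\log(1/h)/\kappa}$) and Proposition \ref{P:coupling} give
\[
\EXP\big[h^{-\beta T}\mathbf 1_G\big]\le\big(\EXP e^{\kappa T}\big)^{\beta\log(1/h)/\kappa}\le\big(C_1(a/u)^\alpha\big)^{\beta\log(1/h)/\kappa},
\]
and since this last quantity tends to $1$ as $\beta\downarrow 0$ it is $\le 1+\tilde\eta/2$ once $\beta$ is small. On $G^c=\{T>\Sigma_\delta\}$ one uses only the crude bound $|f^{T\wedge n}_\omega(x)-f^{T\wedge n}_\omega(y)|\le(L')^{T\wedge n}|x-y|\le(L')^T|x-y|$ (recall $L'\ge 1$) together with $\Prob(T\ge k)\le C_1(a/u)^\alpha e^{-\kappa k}$; for $\beta$ with $\beta\log L'<\kappa$ this yields $\EXP[(L')^{\beta T}\mathbf 1_{T>\Sigma_\delta}]\le C\,e^{-(\kappa-\beta\log L')\Sigma_\delta}$, which tends to $0$ as $\delta\to 0$ and so is $\le\tilde\eta/2$ once $\delta$ is small. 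Adding the two contributions, $\EXP|f^{T\wedge n}_\omega(x)-f^{T\wedge n}_\omega(y)|^\beta\le(1+\tilde\eta)|x-y|^\beta$, and since $N>0$ while $C_1/h\ge 1$ (enlarge $C_1$ if needed) we have $(C_1/h)^{3\beta N}\ge 1$, which gives \eqref{E:expansion_return1}. The factor $(C_1/h)^{3\beta N}$ is in fact wasteful here — with $x\in[u,1-u]$ the exponent $N$ is bounded — but it is the shape in which the bound is needed when the proposition is later applied at return points of arbitrary depth.

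The step I expect to be the main obstacle is the geometric claim of the second paragraph, i.e.\ controlling how much $[f^j_\omega(x),f^j_\omega(y)]$ can inflate before $T$. The difficulty is that $T$ is only a stopping time, not a bounded one: a priori the orbit could linger near $0$ for a very long time while the interval slowly grows. The device is to push that scenario into the small event $G^c$ using the exponential moment of $T$ furnished by Proposition \ref{P:coupling}, and to enlarge $\Sigma_\delta$ by shrinking $\delta$ — the point being that $\Sigma_\delta$ enters only in the contradiction argument and not in the final estimate. This is why $\delta$ is allowed to depend on all earlier data, and why the order of quantifiers is essential: $\beta$ must be chosen small (after $a,u,\tilde\eta$) both to run the Jensen/dominated-convergence step and to ensure $\beta\log L'<\kappa$, and only then is $\delta$ fixed.
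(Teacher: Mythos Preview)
Your proof is correct and follows the same global architecture as the paper's: split on a threshold for $T$, use $C^2$ distortion on the ``good'' event, use the crude Lipschitz bound $(L')^T$ on the tail, and control the tail via the exponential moment of $T$ from Proposition~\ref{P:coupling}. The difference lies in how the good event is handled. The paper compares the derivative at a point of $[x,y]$ with the derivative at a point of $[0,x]$: by the mean value theorem the latter equals $f^T_\omega(x)/x\le h^{-1}a/x$, and the distortion between the two points contributes only $e^{T\kappa\beta}$ (using the hypothesis $aL''h^{-1}<\kappa$ directly). This is where the explicit factor $(a/x)^{1+\alpha}$---hence the $(C_1/h)^{3\beta N}$---comes from. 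You instead bound each factor in the chain rule by $h^{-1}$, obtaining the cruder $(f^{T\wedge n}_\omega)'(\xi)\le h^{-T}$, and then push the whole $h^{-\beta T}$ through Jensen and the $\kappa$-moment. Your route is arguably simpler and yields the nominally stronger conclusion $(1+\tilde\eta)|x-y|^\beta$, at the price of a threshold on $\beta$ that depends on $u$ (through $(C_1(a/u)^\alpha)^{\beta\log(1/h)/\kappa}\le 1+\tilde\eta/2$); the paper's $\beta$-threshold $e^{-\kappa}(L')^\beta<1$ is $u$-free. Since the statement allows $\beta$ to depend on $u$, both are admissible. One small wrinkle: you impose $e^{2aL''}\le 2$ on $a$, which is not among the stated hypotheses (only $aL''h^{-1}<\kappa$ is); you could avoid this by running the geometric claim with the target region $[0,ah^{-1})$ instead of $[0,2a)$, which is exactly the region the paper works in and for which the hypothesis $aL''h^{-1}<\kappa$ suffices.
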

\begin{proof}
Let $\alpha, C_1, \kappa$ be given in Proposition \ref{P:basic1} and \ref{P:coupling} respectively.
Let us fix $a>0$ as above, $u\in (0,a)$, $\tilde{\eta}>0$. Let $\beta$ be so small that $e^{-\kappa}(L')^\beta<1$ and let $K_0$ be so large that
\begin{equation}
\label{E:expansion_return2}
\sum_{n=K_0+1}^\infty C_1\bigg(\frac{a}{u}\bigg)^\alpha  e^{-\kappa n}(L')^{\beta n} < \tilde{\eta}.
\end{equation}
The statement is obvious when $x,y \in [a,1-a]$ as the right-hand side of \eqref{E:expansion_return1} is greater than 1, so we can consider only the situation when at least one of $x, y$ is outside $[a,1-a]$. Let us assume therefore that $x<y$ and $x<a$. To simplify the notation let us write $T(\omega)=T^a_{x,y}(\omega)$. Let $\delta$ be so small that $f^n_\omega(y)<ah^{-1}$ and $|f^n_\omega(x)-f^n_\omega(y)|\le \frac{1}{2}$ for $n\le K_0$ and $x,y$ such that $|x-y|<\delta$.

We have clearly
$$\mE \frac{|f^T_{ \omega}(x)-f^T_\omega(y)|^\beta}{|x-y|^\beta}
=\mE\mathds{1}_{\{T\le K_0\}} \frac{|f^T_{ \omega}(x)-f^T_{ \omega}(y)|^\beta}{|x-y|^\beta}
+\mE\mathds{1}_{\{T> K_0\}}  \frac{|f^T_{ \omega}(x)-f^T_{ \omega}(y)|^\beta}{|x-y|^\beta}.$$

 To estimate the first integral let us take $\omega\in\{T\le K_0\}$ and write by the mean value theorem
$$|f^T_{ \omega}(x)-f^T_{ \omega}(y)|=(f^T_\omega)'(\zeta_\omega)|x-y|$$
 and
$$f^T_\omega(x)=f^T_\omega(y) - f^T_\omega(0)=(f^T_\omega)'(\xi_\omega)\cdot x$$
 for some $\zeta_\omega \in [x,y]$ and $\xi_\omega\in [0,x]$. By the choice of $\delta>0$ and the fact that $\omega\in \{T\le K_0\}$, $T$ is the moment of the first visit of $f^n_\omega(x)$ in $[a,1-a]$, thus the value of $f^T_\omega(x)$ cannot be greater than $h^{-1}a$. Therefore $f^T_\omega(x)/x\le h^{-1}a/x$ and
$$ \frac{|f^T_{ \omega}(x)-f^T_{ \omega}(y)|^\beta}{|x-y|^\beta}
=\big((f^T_\omega)'(\zeta_\omega)\big)^\beta
= \bigg(\frac{(f^T_\omega)'(\zeta_\omega)}{(f^T_\omega)'(\xi_\omega)}\bigg)^\beta \bigg(\frac{f^T_\omega(x)}{x} \bigg)^\beta
$$
$$
\le \bigg(\frac{h^{-1}a}{x}\bigg)^\beta\cdot \exp\bigg(\beta \big(\log(f^T_\omega)'(\zeta_\omega)-\log(f^T_\omega)'(\xi_\omega) \big)\bigg).$$
Using $T\le K_0$ and the choice of $\delta$ again we have $|f^n_\omega(\xi_\omega)-f^n_\omega(\zeta_\omega)|\le f^n_\omega(y)\le h^{-1}a$ for $n\le T$. By the chain rule and the definition of $L''$ we obtain
$$| \log(f^T_\omega)'(\zeta_\omega)-\log(f^T_\omega)'(\xi_\omega)|\le TL''h^{-1}a,$$
hence by the assumptions that we made on $a$
$$
\frac{|f^T_{ \omega}(x)-f^T_{ \omega}(y)|^\beta}{|x-y|^\beta}\le \bigg(\frac{h^{-1}a}{x}\bigg)^\beta\cdot \exp(TL''h^{-1} a\beta)
\le 
\bigg(\frac{h^{-1}a}{x}\bigg)^\beta  \exp(T \beta\kappa) ,
$$
for $\omega$ such that $T\le K_0$, hence taking the expectation, applying the Jensen inequality and Proposition \ref{P:coupling} give
\begin{equation}
\label{E:expansion_return3}
\mE \mathds{1}_{\{T\le K_0\}} \frac{|f^T_{ \omega}(x)-f^T_{ \omega}(y)|^\beta}{|x-y|^\beta} \le \bigg(\frac{h^{-1}a}{x}\bigg)^\beta \bigg( \EXP \exp(T\kappa) \bigg)^\beta
\le
 \bigg(C_1 \frac{h^{-1}a^{1+\alpha}}{x^{1+\alpha}}  \bigg)^\beta .
\end{equation}

If $T>K_0$, then
$$\frac{|f^T_{ \omega}(x)-f^T_{ \omega}(y)|^\beta}{|x-y|^\beta}\le (L')^{\beta T}$$
by the definition of $L'$, therefore
\begin{equation}
\label{E:expansion_return4}
\mE \mathds{1}_{\{T>K_0\}} \frac{|f^T_{ \omega}(x)-f^T_{ \omega}(y)|^\beta}{|x-y|^\beta}
\le 
\sum_{n=K_0+1}^\infty C_1\bigg(\frac{a}{x}\bigg)^\alpha(L')^{\beta n} e^{-\kappa n}< \tilde{\eta}.
\end{equation}

Combining \eqref{E:expansion_return3} with \eqref{E:expansion_return4} and using the fact that $\tilde{\eta}<1$ yields
$$\mE \frac{|f^T_{ \omega}(x)-f^T_{ \omega}(y)|^\beta}{|x-y|^\beta}
\le 
\bigg(1+
 \tilde{\eta}\bigg) 
 \bigg( \frac{C_1}{h} \frac{a^{1+\alpha}}{x^{1+\alpha}}  \bigg)^\beta .
$$
To finish the proof let us observe that $\frac{a}{x} = h^{-N}$ by the definition of $N$. Thus by $C_1^{N(1+\alpha)+1}\ge C_1$ we have
 $$\mE \frac{|f^T_{ \omega}(x)-f^T_{ \omega}(y)|^\beta}{|x-y|^\beta}
\le 
\bigg(1+
 \tilde{\eta}\bigg) 
 \bigg( \frac{C_1}{h^{N(1+\alpha)+1}}  \bigg)^\beta 
 \le 
 \bigg(1+
 \tilde{\eta}\bigg) 
 \bigg( \frac{C_1}{h} \bigg)^{\beta(N(1+\alpha)+1)}
 \le 
\bigg( 1+ \tilde{\eta}\bigg) 
 \bigg( \frac{C_1}{h} \bigg)^{3 \beta N}
 $$
It is easy to see that the proof is still valid if $T$ is replaced by $T\wedge n$.
\end{proof}

\section{Proofs}
\subsection{Proof of Theorem \ref{T:main}}
Let $f_1,\cdots, f_m$, $p_1,\cdots, p_m$ satisfy (A1)-(A3). Clearly it is sufficient to show that for every $a>0$ arbitrarily small
$$
\EXP\big| f^n_\omega(a) - f^n_\omega(1-a) \big| < C q^n
$$
for some $C\ge 1$ and $q\in (0,1)$.

Let $\Lambda$ be the volume Lyapunov exponent
$$
\Lambda= \sum_{i=1}^m p_i \int_0^1 \log f_i'(x) \mu_\ast(dx).
$$
By  Lemma 4.1 in \cite{Gharaei_Homburg_2017} $\Lambda<0$. Let $h$, $L'$, $L''$ be the same as in the preceding section. Let $\alpha$, $C_1$, $\kappa$ be given in Proposition \ref{P:basic1} and \ref{P:coupling} respectively, and let $a>0$ be such that Propositions \ref{P:basic1}, \ref{P:coupling}, \ref{P:expansion_return} are satisfied. Moreover, we assume
\begin{equation}
\label{E:proofT1}
3\mu((0,a))\log \frac{C_1}{h} < \Lambda/8, \quad 3\mu((1-a,1))\log \frac{C_1}{h} < \Lambda/8
\end{equation}
and
$$
hx \le f_i(x) \le h^{-1}x, \quad hx \le 1-f_i(x) \le h^{-1} x \quad \textrm{for $x\in (0,a)$.}
$$
Let $\varphi$ be any real function on $\Sigma\times [0,1]$ depending only on the first coordinate of $\omega$, continuous for $\omega\in\Sigma$ fixed and such that
$$
\varphi(\omega, x)\ge 3\log \frac{C_1}{h}  \mathds{1}_{(0,a)}(x)
+3\log \frac{C_1}{h}  \mathds{1}_{(1-a,1)}(x)
+\log f_\omega'(x) \mathds{1}_{(a,1-a)}(x).
$$
and (by \eqref{E:proofT1}) 
$$\int\int_{\Sigma\times[0,1]} \varphi \textrm{d}\mathbb{P}\otimes \mu_{\ast} <3/4\Lambda<0.$$
By Proposition \ref{P:uniform} there exist $k_0$ such that
\begin{equation}
\label{E:proofT2}
\EXP \varphi(x)+\cdots+\EXP \varphi(f^{k_0-1}_\omega(x))<\frac{k_0\Lambda}{2}<0 \quad \textrm{for $x\in [a,1-a]$.}
\end{equation}
If $r^-(\omega, x)$, $r^+(\omega, x)$ are the numbers of visits of $(f^n_\omega(x))$ in $(0,a)$ and $(1-a,1)$, respectively, until $n\le k_0$ then \eqref{E:proofT2} implies
$$
\EXP \log \bigg( (f^{k_0}_\omega)'(x) \bigg( \frac{C_1}{h} \bigg)^{3r^-(\omega, x)+3r^+(\omega, x)} \bigg) < \frac{k_0\Lambda}{2}<0
\quad \textrm{for $x\in [a,1-a]$,}
$$
and by applying the Taylor formula to $\beta\longmapsto c^\beta$ for $\beta$ in the neighborhood of 0, $c$-fixed number, we obtain $\eta>0$ and $\beta\in (0,1)$ with
$$
\EXP\bigg( (f^{k_0}_\omega)'(x) \bigg( \frac{C_1}{h} \bigg)^{3r^-(\omega, x)+3r^+(\omega, x)} \bigg)^\beta < 1-\eta.
$$
We assume also $\beta$ to be so small that Proposition \ref{P:expansion_return} is satisfied.  By the mean value theorem and the continuity for every $x, y\in[a,1-a]$ with $x<y$ and $|x-y|$ sufficiently small (say less than $\delta$)
\begin{equation}
\label{E:proofT3}
\EXP |f^{k_0}_\omega(x)- f^{k_0}_\omega(y) |^\beta \bigg(  \frac{C_1}{h} \bigg)^{3\beta r^-(\omega, x)+3\beta r^+(\omega, y)}<1-\eta.
\end{equation}
Let as assume also that $\delta$ is so small that the points $|f^{k_0}(x)-f^{k_0}(y)|<\delta'$ for $\omega\in \Sigma$, where $\delta'$ is given in Proposition \ref{P:expansion_return}.

We now proceed to define now a stopping time $\tau=\tau_{x,y}(\omega)$.
\vspace{0.5cm}

\noindent \textit{Case $|x-y|<\delta$}

 If $|x-y|<\delta$ then put
$$
\tau_{x,y}(\omega) = \min \{k\ge k_0: f^k_\omega(x), f^k_\omega(y)\in [a,1-a] \}.
$$
By the definition of $h$, if $f^{k_0}_\omega(x)<a$, then $f^{k_0}_\omega(x)\ge h^{r^-(\omega,x)} a$, thus $N(\omega):=-\frac{\log (a/f^{k_0}_\omega(x))}{\log h}<r^-(\omega)<r^-(\omega)+r^+(\omega)$ for every $\omega\in \Sigma$. For the same reason $N(\omega)<r^-(\omega)+r^+(\omega)$ if $f^{k_0}_\omega(y)>1-a$. By this fact, Proposition \ref{P:expansion_return} applied to $\tilde{\eta}=\eta$, the strong Markov property and \eqref{E:proofT3}
$$
\EXP|f^{\tau\wedge n}_\omega(x) - f^{\tau\wedge n}_\omega(y)|^\beta
\le
(1+\eta)\EXP \bigg(\frac{C_1}{h}\bigg)^{3\beta N(\omega)}|f^{k_0}_\omega(x) - f^{k_0}_\omega(y)|^\beta
$$
$$
\le 
(1+\eta)\EXP \bigg(\frac{C_1}{h}\bigg)^{3\beta (r^-(\omega)+r^+(\omega))}|f^{k_0}_\omega(x) - f^{k_0}_\omega(y)|^\beta
< (1+\eta)(1-\eta)|x-y|^\beta=(1-\eta^2)|x-y|^\beta,
$$
thus
\begin{equation}
\label{E:proofT4}
\EXP|f^{\tau\wedge n}_\omega(x) - f^{\tau\wedge n}_\omega(y)|^\beta <
(1-\eta^2)|x-y|^\beta \quad \textrm{for $|x-y|<\delta$.}
\end{equation}
\vspace{0.5cm}

\noindent \textit{Case $|x-y|\ge \delta$}

By the synchronization of the system there exists $k_1$ such that for
$$
\Prob\bigg( \exists_{k\ge k_1} |f^k_\omega(a)-f^k_\omega(1-a)|^\beta \ge \frac 1 2(1-\eta^2)\delta^\beta \bigg) 
\le \frac{1}{2}(1-\eta^2)\delta^\beta.
$$ 
If for $x,y$ with $|x-y|\ge \delta$ we define $\tau=\tau_{x,y}(\omega)$ to be the first $k\ge k_1$ with $f^k_\omega(x), f^k_\omega(y)\in [a,1-a]$, then the above easily implies
\begin{equation}
\label{E:proofT5}
\EXP|f^{\tau\wedge n}_\omega(x) - f^{\tau\wedge n}_\omega(y)|^\beta <
(1-\eta^2)|x-y|^\beta \quad \textrm{for $|x-y|\ge \delta$, $n\ge k_1$.}
\end{equation}
\vspace{0.5cm}

In both cases $\tau$ is defined as the first moment greater or equal to either $k_0$ or $k_1$ that the trajectory visits $[a,1-a]$. Therefore Proposition \ref{P:coupling} shows that $\sup_{x,y\in [a,1-a]} \EXP \exp(\kappa \tau_{x,y})<\infty$. Let
$$
\rho_n (\omega) = \max \{ j\ge 1: \tau_1+\cdots+\tau_j\le n \},
$$
where $\tau_{j+1} = \tau_j(\omega)+\tau_{x', y'}(\theta^{\tau_j} \omega)$, $x'=f^{\tau_j}_\omega(x)$, $y'=f^{\tau_j}_\omega(y)$, $j\ge 1$, $\tau_1=\tau$. By a similar argument like in \eqref{E:large_deviation6} there exists $\lambda\in (0,1)$ such that $\Prob(\rho_n < \lambda n)$ decays exponentially fast uniformly over $x,y\in[a,1-a]$.

Let 
$$
L=\max_{\omega \in \Sigma}
	 \max\{ \|(f^{i}_\omega)' \|_\infty : i=1,2, \ldots, \max\{k_0, k_1\} \}.
	 $$ 
Fix $n$ and $l\ge \lambda n$ and define $B = \{ \omega\in\Sigma : \rho_n=l \}$. 
Combining \eqref{E:proofT4}, \eqref{E:proofT5}, the strong Markov property and the definition of $L$ we get
$$
\EXP \mathds{1}_{B_l}|f^{ n}_\omega(x) - f^{n}_\omega(y)|^\beta
=
\EXP \EXP[ \mathds{1}_{B_l}|f^{ n}_\omega(x) - f^{n}_\omega(y)|^\beta | \mathcal{F}_{\tau_l} ]
\le 
L \EXP \mathds{1}_{B_l} |f^{\tau_l}_\omega(x) - f^{\tau_l}_\omega(y)|^\beta
$$
$$
\le 
L\EXP\mathds{1}_{B_l} |f^{\tau_{\lfloor \lambda n\rfloor}}_\omega(x) - f^{\tau_{\lfloor \lambda n\rfloor}}_\omega(y)|^\beta,
$$
therefore summation over all $l\ge \lambda n$ leads to
$$
\EXP |f^{ n}_\omega(x) - f^{n}_\omega(y)|^\beta
\le 
L \EXP |f^{\tau_{\lfloor \lambda n\rfloor}}_\omega(x) - f^{\tau_{\lfloor \lambda n\rfloor}}_\omega(y)|^\beta 
+
\Prob(\rho_n < \lambda n).
$$
Applying again the strong Markov property, \eqref{E:proofT4}, \eqref{E:proofT5} and the fact that $c\le c^\beta$ for $c\in [0,1]$ we get
$$
\EXP|f^{ n}_\omega(x) - f^{n}_\omega(y)|\le  \EXP |f^{ n}_\omega(x) - f^{n}_\omega(y)|^\beta
\le 
L (1-\eta^2)^{ \lfloor \lambda n \rfloor}|x-y|^{\beta}
+\Prob(\rho_n<\lambda n),
$$
which completes the proof.

\subsection{Proof of Corollary \ref{C:2}}
In the first step we are going to show that there exist $\tilde{C}\ge 1$, $\widetilde{q}\in (0,1)$ such that
\begin{equation}\label{E:C2}
\bigg|U^n \varphi(x) - \int_0^1 \varphi \textrm{d}\mu_\ast\bigg| \le \tilde{C}L\bigg(\frac{a}{\min\{x,1-x\}}\bigg)^\alpha \widetilde{q}^{\nu n} \quad \textrm{for $n\ge 1$.}
\end{equation}
for every H{\"o}lder function $\varphi$ with $|\varphi(x)-\varphi(y)|\le L|x-y|^\nu$.

Let $a>0$ be such that $\EXP|f^n_\omega(a)-f^n_\omega(1-a)|\le Cq^n$, $n\ge 1$. Let $\varphi$ be as above.  We have by Theorem \ref{T:main}
\begin{equation}
\label{E:proofT6}
|U^n\varphi(y)-U^n\varphi(y') |
=
|\EXP\varphi(f^n_\omega(y))-\EXP\varphi(f^n_\omega(y'))|
\le 
L \EXP \big| f^n_\omega(y) - f^n_\omega(y') \big|^\nu \le
L C^\nu q^{\nu n}
\end{equation}
for $y,y'\in [a,1-a]$.  First we are going to show the assertion under the assumption that $x\in [a,1-a]$. We have
  $$
  \bigg|U^n\varphi(x)-\int_{[0,1]}\varphi(y)\mu_\ast(dy)\bigg|
  =
  \bigg|U^n\varphi(x)-\int_{[0,1]}\varphi(y)P^n\mu_\ast(dy)\bigg|
  $$
  $$
 = \bigg|U^n\varphi(x)-\int_{[0,1]}U^n\varphi(y)\mu_\ast(dy)\bigg|
  \le 
 L \int_{[0,1]}\EXP \big| f^n_\omega(x) - f^n_\omega(y) \big|^\nu \mu_\ast(dy)
 $$
 
Let $\gamma>0$ be such that $\alpha\gamma - 1/2<0$, where $\alpha$ is given in Proposition \ref{P:basic1}. Let $\xi_n = ac^{-\gamma n}$. Then $\mu_\ast((0,\xi_n)\cup (1-\xi_n, 1))\le 2M\xi_n^\alpha$, as $\mu_\ast\in\mathcal{P}_{M,\alpha}$  by Proposition \ref{P:basic1}. Let $R_n^-= \bigcap_{j=1}^{n/2} \{\omega :  f^j_\omega(\xi_n)<a \} $, $R_n^+= \bigcap_{j=1}^{n/2} \{ \omega :  f^j_\omega(\xi_n)>1-a \} $. Using Proposition \ref{P:basic1} again we get that $\Prob(R_n^-\cup R_n^+)\le c^{n(1/2-\alpha\gamma)}$. Combining these two facts we reduce the proof to showing that
$$
\int_{\xi_n}^{1-\xi_n} \int_{\Sigma\setminus (R_n^-\cup R_n^+)} 
\big| f^n_\omega(x) - f^n_\omega(y) \big|^\nu \mu_\ast(dy)
$$
decays exponentially fast. Let, having $y\in [\xi_n, 1-\xi_n]$ fixed,
$$C_l = \{ f^l_\omega(y)\in [a,1-a] \}
\cap \{f_\omega^{l-1}(y)\not\in [a,1-a] \} 
\cap \cdots 
\cap \{f_\omega^{1}(y)\not\in [a,1-a] \}
\cap \{ y\not\in [a,1-a] \}
$$
for $l=0,1\cdots, \lfloor n/2 \rfloor$. By the Markov property and \eqref{E:proofT6}
$$
\int_{C_l} \big| f^n_\omega(x) - f^n_\omega(y) \big|^\nu \textrm{d}\mathbb{P} < C^\nu q^{\nu n/2}\Prob(C_l).
$$
Since $\Sigma\setminus (R_n^-\cup R_n^+)$ is a disjoint sum of $C_l$'s and the above estimate works for every $y\in [\xi_n, 1-\xi_n]$,
\begin{equation}
\label{E:proofT7}
 \bigg|U^n\varphi(x)-\int_{[0,1]}\varphi(y)\mu_\ast(dy)\bigg|
  \le
  c^{n(1/2-\alpha\gamma)} +  C^\nu q^{\nu n/2} + 2M\xi_n^\alpha
\end{equation}
provided $x\in [a,1-a]$.

For $x\not\in [a,1-a]$ it is convenient to assume without loss of generality that $\int_0^1 \varphi d\mu_\ast=0$. 
 Let
$$C_l = \{ f^l_\omega(x)\in [a,1-a] \}
\cap \{f_\omega^{l-1}(x)\not\in [a,1-a] \} 
\cap \cdots 
\cap \{f_\omega^{1}(x)\not\in [a,1-a] \}
\cap \{ x\not\in [a,1-a] \}
$$
for $l=1,2\cdots, \lfloor n/2 \rfloor$, and $R_n= \bigcap_{j=1}^{\lfloor n/2 \rfloor} \{ f^j_\omega(x)<a \} $  we have
$$
\bigg|U^n\varphi(x)\bigg|
\le 
\sum_{l=1}^{\lfloor n/2 \rfloor} \int_{C_l}\bigg| U^{n-l}\varphi(f^l_\omega(x)) \bigg|
+ \Prob(R_n) \|\varphi\|_\infty
  $$
  
By \eqref{E:proofT7} the first term decays not slower than $c^{\frac n2(1/2-\alpha\gamma)} +  C^\nu q^{\nu n/4} + 2M\xi_{\lfloor n/2 \rfloor}^\alpha$ (note \eqref{E:proofT7} is uniform over $x\in[a,1-a]$). By Proposition \ref{P:basic1} $\Prob(R_n)\le (\frac{a}{z} )^\alpha c^{n/2}$, where $z=\min\{x, 1-x\}$. This proves \eqref{E:C2}.

To pass to the assertion in the statement, fix two Lipschitz functions $\psi$, $\varphi$. Then \eqref{E:C2} holds for $\varphi$ with $\nu=1$. Let $\zeta$ be such that $\zeta^{\alpha} \in (\widetilde{q}, 1)$. Let $K_n = [a\zeta^n, 1- a\zeta^n]\subseteq [0,1]$ and $R_n = [0,1]\setminus K_n$. We have by \eqref{E:C2} and Proposition \ref{P:basic1} again

$$\bigg|\int_0^1  U^n \varphi(x) \psi(x) d\mu_\ast(x) - \int_0^1 \varphi(x) \textrm{d}\mu_\ast(x) \int_0^1 \psi(x) \textrm{d}\mu_\ast(x)  \bigg| 
\le 
\int_0^1 \bigg| U^n \varphi(x) - \int_0^1 \varphi(x) \textrm{d}\mu_\ast(x) \bigg| \|\psi\|_\infty d\mu_\ast(x)
$$
$$
=\int_{K_n} \bigg| U^n \varphi(x) - \int_0^1 \varphi(x) \textrm{d}\mu_\ast(x) \bigg| \|\psi\|_\infty d\mu_\ast(x)
+
\int_{R_n} \bigg| U^n \varphi(x) - \int_0^1 \varphi(x) \textrm{d}\mu_\ast(x) \bigg| \|\psi\|_\infty d\mu_\ast(x),
$$
$$
\le \widetilde{C} \|\varphi\|_{\Lip} \|\psi\|_{\Lip} \bigg( \frac{a}{a\zeta^n} \bigg)^\alpha \widetilde{q}^n
 + 4 M a^\alpha \zeta^{\alpha n} \|\varphi\|_{\Lip} \|\psi\|_{\Lip},
$$
which immediately implies the assertion by the choice of $\zeta$.

\bibliographystyle{alpha}
\bibliography{bib}

\end{document}